\DeclareMathAlphabet{\eufrak}{U}{}{}{}  
\SetMathAlphabet\eufrak{normal}{U}{euf}{m}{n}
\SetMathAlphabet\eufrak{bold}{U}{euf}{b}{n}
\newtheorem{prop}{Proposition}[section]
\newtheorem{theorem}[prop]{Theorem}
\newtheorem{lemma}[prop]{Lemma}
\newtheorem{assumption}[prop]{Assumption}
\theoremstyle{definition}
\newtheorem{remark}[prop]{Remark}
\newtheorem{definition}[prop]{Definition}
\newtheorem{notation}[prop]{Notation}
\newtheorem{convention}[prop]{Convention}
\numberwithin{equation}{section}
\def\E{\mathbb{E}}
\def\P{\mathbb{P}}
\def\real{\mathbb{R}}
\def\F{\mathcal{F}}
\def\1{\textbf{1}}
\def\ind#1{\textbf{1}_{\left\{#1\right\}}}
\def\X{\mathbb{X}}
\def\Cov{\textrm{Cov}}
\newcommand{\be}{\begin{equation}}
\newcommand{\ee}{\end{equation}}
\newcommand{\bde}{\begin{displaymath}}
\newcommand{\ede}{\end{displaymath}}
\newcommand{\beq}{\begin{eqnarray*}}
\newcommand{\eeq}{\end{eqnarray*}}
\newcommand{\beqa}{\begin{eqnarray}}
\newcommand{\eeqa}{\end{eqnarray}}
\newcommand{\bel }{\left\{\begin{array}{ll}}
\newcommand{\eel}{\cr \end{array} \right.}
\newcommand{\bex}{\begin{ex} \rm }
\newcommand{\eex}{\end{ex}}
{


\def\E{\mathbb E}
\def\F{{\cal F}}

\def\P{\mathbb P}


\def\I{\mathcal I}


\def\cal#1{\mathcal{#1}}

\author{Caroline Hillairet\footnote{ENSAE  Paris, CREST UMR 9194,
5  avenue Henry Le Chatelier
91120 Palaiseau, France.  Email: \texttt{caroline.hillairet@ensae.fr}} \and Anthony R\'eveillac\footnote{INSA de Toulouse, IMT UMR CNRS 5219, Universit\'e de Toulouse, 135 avenue de Rangueil 31077 Toulouse Cedex 4 France. \; Email: \texttt{anthony.reveillac@insa-toulouse.fr}} }

\title{Explicit correlations for the Hawkes processes\footnote{This research is supported by a grant of the French National Research Agency (ANR), Investissements d'Avenir (LabEx Ecodec/ANR-11-LABX-0047) and the Joint Research Initiative "Cyber Risk : actuarial modeling" with the partnership of AXA Research Fund.}}

\begin{document}

\maketitle

\allowdisplaybreaks

\begin{abstract}
\noindent
In this paper we fill a gap in the literature by providing exact and explicit expressions for the correlation of general Hawkes processes together with its intensity process.  Our methodology relies on the Poisson imbedding representation and on 
recent findings on Malliavin calculus and pseudo-chaotic representation for counting processes.
\end{abstract}

\noindent
\textbf{Keywords:} Hawkes processes; Poisson imbedding representation; Malliavin calculus.\\
\noindent
\textbf{Mathematics Subject Classification (2020):} 60G55; 60G57; 60H07.

\section{Introduction}
\label{section:intro}

Hawkes processes have been introduced in \cite{Hawkes} to describe seismological phenomena with clustering features like earthquakes. 
Indeed, these point processes have the peculiarity to model excitation effects: past jumps impact the point process' intensity  through an excitation kernel   and thus trigger (or  inhibit if the kernel is negative) future jumps.  This implies correlations between successive jump events.
Since  their first historical  application in seismology, Hawkes processes have been widely used in many different fields, among which finance and insurance, neurosciences,   or social network modeling. 
Hawkes process are used  for instance in neurosciences to model the interactions between the neurons within their sequences of spikes   (see e.g. \cite{Evahawkes}),   to model retweet cascades   in social media (see e.g. \cite{rizoiu2017hawkes}), to model the arrival of defaults  in credit risk (see e.g.  \cite{Errais_et_al_2010}), or the arrival of sell/buy orders in  limit order book  for high-frequency finance (see e.g. \cite{bacry2015hawkes}); they can also be relevant to model the frequency component in insurance loss portfolios  (see e.g. \cite{Hillairet_Reveillac_Rosenbaum} for cyber-risk).

\noindent Mathematically, given a parameter $\mu>0$ and a mapping (often called the excitation kernel) $\Phi : \real_+ \to \real_+$, a Hawkes process with parameter $(\mu,\Phi)$ denotes a counting process $(H_t)_{t \geq 0}$ whose stochastic intensity $(\lambda_t)_{t\geq}$ satisfies a Volterra type integral equation : 
\begin{equation}
\label{eq:introlambda}
\lambda_t = \mu + \int_{(0,t)} \Phi(t-s) dH_s, \quad t\geq 0.
\end{equation}
Under the classical assumption $\|\Phi\|_1 < 1$ it has been proved in \cite{Hawkes,Hawkes_71b} that this Volterra equation is well-posed. More precisely, $H$ becomes a stationary process by replacing Relation (\ref{eq:introlambda}) by
$$ \lambda_t = \mu + \int_{(-\infty,t)} \Phi(t-s) dH_s, \quad t \geq 0.$$
This stationary feature is indeed interesting by itself but also allows one to characterize uniquely the Hawkes process within the class of stationary processes through its so-called \textit {first and second-order statistics} (see \cite{Hawkes,Bacry:2014aa}) that is the expression of $\E[\lambda_t]=\E[\lambda_0]$ and of $\Cov(dH_s,dH_t)$ which in the stationary case only depends on the difference $t-s$. Characterization here has to be understood by the fact that the kernel $\Phi$ is the unique solution to the Volterra integral equation describing the measure  $\Cov(dH_s,dH_t)$ (see \cite{Bacry:2014aa}  for a precise statement). However, even in this stationary case, this knowledge is not sufficient to derive general expressions for mixed correlations like $\E[\lambda_s H_t]$. In addition, for some applications,  considering the process starting at $-\infty$ is not always   relevant: this calls for studying Hawkes processes with intensity functions given by Relation \eqref{eq:introlambda}, for which stationarity is indeed lost. Another line of research to obtain quantitative information on the distribution of a Hawkes process consists in benefiting from the immigration-birth representation of a Hawkes process as obtained in \cite{hawkes1974cluster}. More specifically Laplace transform of marginals $H_t$ can be  proved to satisfy once again an integral Volterra equation. This allows one to derive moments of marginals $H_t$ and  to give an analytic expression for the probabilities $\P[H_t=k]$. This result has been extended to related processes such as  compound Hawkes processes like for example  in \cite{Gaoetal,Errais_et_al_2010,Euch_Rosenbaum}. These relations are valid in the non-stationary framework that is with an intensity process of the form (\ref{eq:introlambda}) but they do not provide  similar information on the intensity process $\lambda$
 and on mixed correlations for $(H_s,\lambda_t)$ with possible different marginal times $s$ and $t$. We finally mention that specific information on the law of $H$ and $\lambda$ like moments can be obtained in the particular cases of exponential and Erlang kernel (and of linear combinations of them). In a nutshell, these kernels write down as $\Phi(u):= \alpha u^n e^{-\beta u}$ for parameters  $\alpha,\beta,n$ to be chosen appropriately. The specific feature of these kernels lies in the fact that they share a Markovian structure for which  the so-called Dynkin formula can be used. In this line of research we mention \cite{Errais_et_al_2010,Cui_et_al_2020,Cui_et_al_2022,Privault_cumulants} to cite a few.\\\\
\noindent
In this paper, we fill this gap by providing in Theorem \ref{th:main} explicit expressions of quantities $\E[H_s H_t]$, $\E[\lambda_s H_t]$ and $\E[\lambda_s \lambda_t]$ for a general Hawkes process with general kernel $\Phi$. Our approach relies on recent findings on Malliavin calculus and \textit{pseudo}-chaotic representation for counting process obtained in \cite{Hillairet_Reveillac_Rosenbaum,Caroline_Anthony_chaotic}. As a by-product our methodology could apply to more general counting processes in the line of Theorem \ref{th:main2}. In particular we focus here on one-dimensional Hawkes processes and leave the extension to the multi-dimensional case for future research.\\\\ 
\noindent
We proceed as follows. Our main result Theorem \ref{th:main} is stated in Section \ref{section:main}. We present in Section \ref{section:preliminaries} the elements of Malliavin calculus for counting process that will be applied to the specific case of Hawkes processes in Section \ref{section:Hawkes}. Finally the proof of Theorem \ref{th:main} is given in Section \ref{section:proofmain}. 

\section{Main result}
\label{section:main}

Through this paper $\Phi$ denotes a map $\Phi:\real_+\to \real_+$.

\begin{assumption}
\label{assumption:Phi}
The mapping $\Phi : \real_+ \to \real_+$ belongs to $L^1(\real_+;dt)$ with 
$$\|\Phi\|_1:=\int_{\real_+} \Phi(t) dt < 1.$$
\end{assumption}

\begin{definition}[Hawkes process, \cite{Hawkes}]
\label{def:standardHawkes}
Let $(\Omega,\mathcal F,\P,\mathbb F:=(\mathcal F_t)_{t\geq 0})$ be a filtered probability space, $\mu>0$ and $\Phi:\real_+ \to \real_+$ satisfying Assumption \ref{assumption:Phi}. A Hawkes process $H:=(H_t)_{t\geq 0}$ with parameters $\mu$ and $\Phi$ is a counting process such that   
\begin{itemize}
\item[(i)] $H_0=0,\quad \P-a.s.$,
\item[(ii)] its ($\mathbb{F}$-predictable) intensity process is given by
\begin{equation}
\label{eq:lambda}
\lambda_t:=\mu + \int_{(0,t)} \Phi(t-s) dH_s, \quad t\geq 0,
\end{equation}
that is for any $0\leq s \leq t$ and $A \in \mathcal{F}_s$,
$$ \E\left[\textbf{1}_A (H_t-H_s) \right] = \E\left[\int_{(s,t]} \textbf{1}_A \lambda_r dr \right].$$
\end{itemize}
\end{definition}

\noindent By definition a Hawkes process exhibits a convolution structure related to Volterra integral equations as we will make precise in Section \ref{section:Volterra}. Similarly to Volterra ODEs, our expressions of correlations only involve the mapping $\Psi$ below which is the infinite sum of iterated convolutions of the excitation kernel $\Phi$.
 
\begin{prop}[See \textit{e.g.} \cite{Bacryetal2013}]
\label{prop:Phin}
Assume $\Phi$ enjoys Assumption \ref{assumption:Phi}. Let the sequence of iterated convolutions of $\Phi$ :
\begin{equation}
\label{eq:Phin}
\Phi_1:=\Phi, \quad \Phi_n(t):=\int_0^t \Phi(t-s) \Phi_{n-1}(s) ds, \quad t \in \real_+, \; n\in \mathbb{N}^*.
\end{equation}
 For every $n\geq 1$, $\|\Phi_n\|_1 = \|\Phi\|_1^n$ and the mapping 
\begin{equation}
\label{eq:Psi}
\Psi:=\sum_{n=1}^{+\infty} \Phi_n
\end{equation}
is well-defined as a limit in $L_1(\real_+;dt)$ and $\|\Psi\|_1 = \frac{\|\Phi\|_1}{1-\|\Phi\|_1}$.\\
\end{prop}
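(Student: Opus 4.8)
The plan is to establish the two norm identities first and then deduce the $L^1$-convergence of the series defining $\Psi$ as a routine consequence of completeness.

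First I would prove $\|\Phi_n\|_1 = \|\Phi\|_1^n$ by induction on $n$. The base case $n=1$ is the definition $\Phi_1 = \Phi$. For the inductive step, assuming $\|\Phi_{n-1}\|_1 = \|\Phi\|_1^{n-1}$, I would write
\[
\|\Phi_n\|_1 = \int_{\real_+} \left| \int_0^t \Phi(t-s)\Phi_{n-1}(s)\,ds \right| dt = \int_{\real_+} \int_0^t \Phi(t-s)\Phi_{n-1}(s)\,ds\,dt,
\]
where the absolute values disappear because $\Phi \geq 0$ (Assumption~\ref{assumption:Phi}) forces $\Phi_{n-1} \geq 0$ for all $n$, again by an immediate induction. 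Then Tonelli's theorem (all integrands nonnegative) lets me swap the order of integration and substitute $u = t - s$:
\[
\|\Phi_n\|_1 = \int_{\real_+} \Phi_{n-1}(s) \left( \int_s^{+\infty} \Phi(t-s)\,dt \right) ds = \int_{\real_+} \Phi_{n-1}(s)\,ds \cdot \int_{\real_+} \Phi(u)\,du = \|\Phi_{n-1}\|_1 \|\Phi\|_1,
\]
which equals $\|\Phi\|_1^n$ by the induction hypothesis. This also shows each $\Phi_n \in L^1(\real_+;dt)$.

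Next, to see that $\Psi = \sum_{n=1}^{\infty} \Phi_n$ is well-defined in $L^1(\real_+;dt)$, I would note that $L^1(\real_+;dt)$ is a Banach space, and the partial sums form a Cauchy sequence because $\sum_{n=1}^{\infty} \|\Phi_n\|_1 = \sum_{n=1}^{\infty} \|\Phi\|_1^n < +\infty$ — this is where Assumption~\ref{assumption:Phi}, namely $\|\Phi\|_1 < 1$, is essential, since it makes the geometric series converge. Hence the series converges absolutely, thus in norm, to some $\Psi \in L^1(\real_+;dt)$. Finally, $\|\Psi\|_1 \leq \sum_{n=1}^{\infty}\|\Phi_n\|_1 = \sum_{n=1}^{\infty}\|\Phi\|_1^n = \frac{\|\Phi\|_1}{1-\|\Phi\|_1}$, and conversely since all $\Phi_n \geq 0$ one has $\Psi \geq \sum_{n=1}^{N}\Phi_n$ pointwise a.e. for every $N$, so $\|\Psi\|_1 \geq \sum_{n=1}^{N}\|\Phi\|_1^n$; letting $N\to\infty$ gives the reverse inequality, hence equality $\|\Psi\|_1 = \frac{\|\Phi\|_1}{1-\|\Phi\|_1}$.

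There is no real obstacle here; the only points requiring a little care are the justification of Tonelli's theorem (guaranteed by nonnegativity of all the $\Phi_n$, which is itself worth stating explicitly as a preliminary induction) and the distinction between convergence of the series in $L^1$ versus the pointwise/a.e. bound used for the lower estimate on $\|\Psi\|_1$. Both are standard, so the argument is essentially the three inductive/Banach-space steps above.
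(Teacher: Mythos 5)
Your proof is correct. The paper itself offers no proof of this proposition — it is stated with a citation to Bacry et al. (2013) as a standard fact — and your argument (nonnegativity of the $\Phi_n$ by induction, Tonelli plus the substitution $u=t-s$ to get $\|\Phi_n\|_1=\|\Phi_{n-1}\|_1\|\Phi\|_1$, completeness of $L^1$ with the geometric series $\sum_n\|\Phi\|_1^n<\infty$ for convergence, and the lower bound from nonnegativity to turn the triangle inequality into the exact identity $\|\Psi\|_1=\|\Phi\|_1/(1-\|\Phi\|_1)$) is exactly the standard one. Two tiny remarks: the pointwise bound $\Psi\geq\sum_{n=1}^N\Phi_n$ a.e. needs the identification of $\Psi$ with the a.e.\ limit of the (monotone) partial sums, e.g.\ via an a.e.-convergent subsequence or monotone convergence; alternatively you can skip that step entirely, since norm convergence gives $\|\Psi\|_1=\lim_N\|\sum_{n=1}^N\Phi_n\|_1=\lim_N\sum_{n=1}^N\|\Phi\|_1^n$ directly, the middle equality holding because the summands are nonnegative.
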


\newpage

\noindent
We now state our main result. 
\begin{theorem}
\label{th:main}
Let $(H_t)_{t\geq 0}$ be a Hawkes process (with intensity $\lambda$ satisfying (\ref{eq:lambda})) with parameters $\mu>0$ and $\Phi:\real_+ \to \real_+$ satisfying Assumption \ref{assumption:Phi}. \\

\noindent For any $t\geq 0$, it holds that 
\begin{equation}
\label{eq:expectation}
\left\lbrace
\begin{array}{l}
\E\left[H_t \right] = \mu \int_0^t\left(1+\int_0^u \Psi(r) dr\right) du \\ \\
\E\left[\lambda_t\right] = \mu \left(1+ \int_0^t \Psi(r) dr\right).
\end{array}
\right.
\end{equation}

\noindent For any $s,t$ with $0\leq s \leq t$, 
\begin{itemize}
\item[(i)] The covariance of the Hawkes process $H$ is given by
\begin{eqnarray}
\label{eq:covH}
&&\hspace*{-1cm}\Cov(H_s,H_t) =\E\left[H_s H_t \right] -\E\left[H_s\right] \E\left[H_t\right] \nonumber\\
&=&\mu \int_0^s \left(1+\int_0^{v} \Psi(w) dw\right) \left(1+\int_{v}^s \Psi(y-v) dy\right) \left(1+\int_v^{t} \Psi(y-v) dy\right) dv.\nonumber\\
\end{eqnarray}
\item[(ii)] The covariance  of the Hawkes' intensity $\lambda$ is given by
\begin{eqnarray}
\label{eq:covlambdalambda}
 \Cov(\lambda_s,\lambda_t) &=&\E\left[\lambda_s \lambda_t\right] - \E\left[\lambda_s\right] \E\left[\lambda_t\right]\nonumber\\ 
&=&\int_0^s \Psi(s-v) \Psi(t-v) \left(1+\int_0^{v} \Psi(v-w) dw\right) dv.\nonumber\\
\end{eqnarray}
\item[(iii)] The mixed correlation between the Hawkes process and its intensity is given by
\begin{equation}\label{eq:covlambdaH1}
\left\lbrace
\begin{array}{ll}
\Cov(\lambda_s,H_t) &=\E[\lambda_s H_t] - \E[\lambda_s] \E[H_t] \\ 
&=\int_0^s \Psi(s-v) \left(1+\int_0^{v} \Psi(w) dw\right) \left(1+ \int_v^{t} \Psi(y-v) dy\right) dv, \\  \\
\Cov(H_s,\lambda_t) &=\E[H_s \lambda_t] - \E[H_s] \E[\lambda_t]\\ 
&=\int_0^s \Psi(t-v) \left(1+\int_0^{v} \Psi(w) dw\right) \left(1+\int_{v}^s \Psi(y-v) dy\right) dv.
\end{array}
\right.
\end{equation}
\end{itemize}
\end{theorem}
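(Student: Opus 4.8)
The plan is to exploit the Poisson imbedding representation of the Hawkes process together with the Malliavin calculus toolbox for counting processes announced in the introduction. Under Poisson imbedding, $H$ is realized on a Poisson space driven by a random measure $N$ on $\real_+^2$, and the pathwise construction of $H$ produces an explicit expression for the Malliavin (add-a-point) derivative $D_{(\theta,\cdot)}H_t$ in terms of the ``descendants'' of a point inserted at time $\theta$. The first step is to identify, for the Hawkes dynamics \eqref{eq:lambda}, the first-order derivative $\E[D_\theta H_t]$ and $\E[D_\theta \lambda_t]$; iterating the Volterra equation \eqref{eq:lambda} and taking expectations should produce exactly the resolvent kernel $\Psi$ of Proposition \ref{prop:Phin}, yielding $\E[D_\theta \lambda_t] = \Psi(t-\theta)\mathbf{1}_{\theta<t}$ and $\E[D_\theta H_t] = \mathbf{1}_{\theta<t} + \int_\theta^t \Psi(y-\theta)\,dy$. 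Integrating the second identity in \eqref{eq:lambda}, this already gives the two formulas in \eqref{eq:expectation}.

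The heart of the argument is the Mecke/integration-by-parts (or pseudo-chaotic) formula: for product functionals one has a representation of $\E[FG]$ in terms of the compensated stochastic integral of $DF\cdot DG$ against the intensity of $N$, which here reduces to a Lebesgue integral $\int_0^{\infty} \E[D_\theta F \, D_\theta G]\,d\theta$ plus the product of expectations — essentially a Clark--Ocone/covariance identity $\Cov(F,G) = \int_0^\infty \E[D_\theta F\, D_\theta G]\,d\theta$ valid in this setting. Applying this with $(F,G)$ equal successively to $(H_s,H_t)$, $(\lambda_s,\lambda_t)$, $(\lambda_s,H_t)$ and $(H_s,\lambda_t)$ reduces everything to computing the second moments $\E[D_\theta H_s\, D_\theta H_t]$ etc. These in turn are governed by a linear Volterra system for the pair $(D_\theta H, D_\theta\lambda)$ once a point is added at $\theta$: the added point at $\theta$ triggers its own cascade, and each pre-existing point's cascade is itself perturbed, so $D_\theta H_t$ again solves a Hawkes-type equation whose ``immigration'' is the inserted atom. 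Taking expectations of the product and using that the perturbed cascade after time $v$ (the time of the first descendant) is again driven by $\Psi$, one should recover the nested integrals $\bigl(1+\int_0^v\Psi(v-w)dw\bigr)$ (ancestral contribution up to $v$) times two independent-looking factors $\bigl(1+\int_v^s\Psi(y-v)dy\bigr)$ and $\bigl(1+\int_v^t\Psi(y-v)dy\bigr)$ coming from the two copies of the derivative. The specialization $G=\lambda_t$ just replaces the corresponding factor by $\Psi(t-v)$, since $\lambda$ is the ``intensity level'' rather than the integrated count.

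The main obstacle I anticipate is making rigorous the second-order Malliavin computation: one must control $\E[D_\theta H_s D_\theta H_t]$, which involves the second derivative / two-point insertions and the self-referential structure of the Hawkes cascade, and show that the only surviving term after taking expectations is the ``common ancestor at $v$'' term that produces the triple product structure — cross terms between distinct ancestral lines must cancel against the $\E[F]\E[G]$ subtraction. The clean way to do this is to linearize: write $H_t = \int_0^t \lambda_r\,dr + M_t$ with $M$ a martingale, apply the covariance identity to $\lambda$ first (where the Volterra structure is most transparent), then bootstrap to $\E[\lambda_s H_t]$ and $\E[H_s H_t]$ by integrating in $s$ and/or $t$ and using $\Cov(H_s,H_t)=\int_0^s\int_0^t \Cov(\lambda_u,dH_v)+\ldots$ type identities, i.e. Fubini on the covariance formulas. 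Once \eqref{eq:covlambdalambda} is established, identities (i) and (iii) should follow by integrating \eqref{eq:covlambdalambda} in the appropriate variables together with the martingale part contributing the ``diagonal'' term $\mu\int_0^s(1+\int_0^v\Psi)dv$ that upgrades one factor from $\Psi(s-v)$ to $1+\int_v^s\Psi(y-v)dy$; checking that this integration indeed reproduces the stated kernels is the routine but delicate bookkeeping step.
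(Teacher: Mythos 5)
There is a genuine gap at the heart of your argument: the identity you rely on, $\Cov(F,G)=\int_{\mathbb X}\E\left[D_x F\, D_x G\right]dx$ with $D_xF:=F\circ\varepsilon^+_x-F$, is simply not valid on Poisson space. What Clark--Ocone plus the It\^o isometry gives is $\Cov(F,G)=\int_{\mathbb X}\E\left[{}^p(D_xF)\,{}^p(D_xG)\right]dx$, where ${}^p$ denotes the \emph{predictable projection}, and this is not the same as $\E[D_xF\,D_xG]$ (nor is it computable from $\E[D_xF]$ and $\E[D_xG]$ alone, since for the Hawkes functionals these projections are genuinely random). The exact first-order-free identity is the chaotic one, $\Cov(F,G)=\sum_{n\geq 1}\frac{1}{n!}\int_{\mathbb X^n}\E\left[D^n_{(x_1,\dots,x_n)}F\right]\E\left[D^n_{(x_1,\dots,x_n)}G\right]dx_1\cdots dx_n$, which involves derivatives of \emph{all} orders. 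A one-line counterexample to your version: take $F=G=N(A)^2$ with $|A|=a$; then $\int_A\E[(D_xF)^2]dx=4a^3+8a^2+a$ while $\mathrm{Var}(N(A)^2)=4a^3+6a^2+a$ (your formula is only the Poincar\'e upper bound). So the step "reduces everything to computing $\E[D_\theta H_s\,D_\theta H_t]$" does not reduce the problem; the higher-order insertions you wave at in the last paragraph are exactly where the content lies, and your proposed repair (martingale decomposition plus bootstrapping from $\Cov(\lambda_s,\lambda_t)$) still invokes the same invalid identity to get $\Cov(\lambda_s,\lambda_t)$ in the first place, so it does not close the loop. The fact that the final answers are single integrals over a common time $v$ is an artifact of resummation, not evidence that a first-order identity suffices.

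For comparison, the paper's proof is built precisely to avoid this trap: it expands one factor, $X_s^\xi=\sum_{n\geq1}\frac{1}{n!}\I_n(c_n^{\xi,s})$, in its pseudo-chaotic expansion (all orders $n$, with coefficients $c_n^{\xi,s}=\E$-free pathwise differences $\mathcal D^n X^\xi_s$ computed in Proposition \ref{prop:pseudochageneral} and integrated in the marks in Proposition \ref{prop:integratec}), and then applies Mecke's formula (Proposition \ref{prop:Mecke}) to the other factor, which requires knowing the $n$-point shifted expectations $\E[X_t^\zeta\circ\varepsilon^{+,n}_{(x_1,\dots,x_n)}]$; these are obtained by solving a Volterra equation (Propositions \ref{prop:expectlambdashifted} and \ref{prop:expectationXtshifted}, via Lemma \ref{lemma:Bacryetal}), and only after resumming the resulting series of iterated convolutions (Lemma \ref{lemma:magic}, with the identity $\Psi\ast\Phi=\Psi-\Phi$) do the kernels $\left(1+\int\Psi\right)$ appear. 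Your first-order computations (the formulas for $\E[D_\theta\lambda_t]$ and $\E[D_\theta H_t]$, hence \eqref{eq:expectation}) are consistent with the $n=1$ case of that machinery, but to prove (i)--(iii) you would need either the all-orders covariance identity above together with the $n$-point analogues of your derivative formulas, or the paper's Mecke-plus-pseudo-chaos route; as written, the proposal's central identity fails and the covariance formulas are not established.
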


\noindent The proof of Theorem \ref{th:main} is presented in Section \ref{subsection:proofof main} and follows from Theorem \ref{th:main2} in Section \ref{sssection:proofmain2}.

\begin{remark}
In particular we recover the expression of $\E[(H_t)^2]$ from \cite{Gaoetal} as 
$$ \E[(H_t)^2] = \left(\mu \int_0^t \Psi_1(s) ds\right)^2 + \mu \int_0^t \Psi_2(u) du,$$
where using notations of \cite{Gaoetal}, $\mu \int_0^t \Psi_1(s) ds = \E[H_t]$ and 
$$\mu \int_0^t \Psi_2(u) du =\mu \int_0^t \left(1+\int_0^v \Psi(w) dw\right) \left(1+\int_v^t \Psi(y-v) dy\right)^2 dv.$$
\end{remark}
 
\section{Elements of Malliavin calculus on the Poisson space}
\label{section:preliminaries}

We set $\mathbb{N}^*:=\mathbb{N} \setminus \{0\}$ the set of positive integers. We make use of the convention :

\begin{convention}
\label{convention:sums}
For $a, b \in \mathbb Z$ with $a > b$, and for any map $\rho : \mathbb Z \to \real$,
$$ \prod_{i=a}^b \rho(i) :=1; \quad \sum_{i=a}^b \rho(i) :=0.$$
\end{convention}
\noindent We set 
\begin{equation}
\label{eq:X}
\mathbb X:= \real_+\times \real_+ = \{x=(t,\theta), \; t \in \real_+, \; x\in \real_+\};
\end{equation}
Throughout this paper the notation $(t,\theta)$ will  refer to the first and second coordinate of an element in $\mathbb X$.

\begin{notation}
\label{notation:ordered}
Let $k\in \mathbb N^*$ and $(x_1,\ldots,x_k)=((t_1,\theta_1),\ldots,(t_k,\theta_k))$ in $\mathbb X^k$. We set $(x_{(1)},\ldots,x_{(k)})$ the ordered in the $t$-component of $(x_1,\ldots,x_k)$ with \;
$0 \leq t_{(1)} \leq \cdots \leq t_{(k)} ,$\;
and write $x_{(i)}:=(t_{(i)},\theta_{(i)})$. 
\end{notation}
\noindent We simply write $dx:=dt \, d\theta$ for the Lebesgue measure on $\mathbb X$. We also set $\mathcal B(\mathbb X)$ the set of Borelian of $\mathbb X$.\\\\
\noindent
Our approach lies on the so-called Poisson imbedding representation allowing one to represent a counting process with respect to a baseline random Poisson measure on $\mathbb X$. Most of the elements presented in this section are taken from \cite{Privault_2009,Last2016}.\\
\noindent 
We define $\Omega$ the space of configurations on $\mathbb X$ as 
$$ \hspace{-3.5em}\Omega:=\left\{\omega=\sum_{i=1}^{n} \delta_{x_i}, \; x_i:=(t_{i},\theta_i) \in \mathbb X,\; i=1,\ldots,n,\; 0=t_0 < t_1 < \cdots < t_n, \; \theta_i \in \mathbb{R}_+, \; n\in \mathbb{N}\cup\{+\infty\} \right\}.$$
Each path of a counting process is represented as an element $\omega$ in $\Omega$ which is a $\mathbb N$-valued $\sigma$-finite measure on $\mathbb X =\mathbb{R}_+^2$. Let $\mathcal F$ be the $\sigma$-field associated to the vague topology on $\Omega$. Let $\P$ the Poisson measure on $ \Omega$ under which the canonical process $N$ on $\Omega$ is a Poisson process with intensity one that is : 
$$ (N(\omega))([0,t]\times[0,b])(\omega):=\omega([0,t]\times[0,b]), \quad t \geq 0, \; b \in \mathbb{R}_+,$$
is an homogeneous Poisson process with intensity one ($N([0,t]\times[0,b])$ is a Poisson random variable with intensity $ b t$ for any $(t,b) \in \mathbb X$). We set $\mathbb F^N:=(\F_t^N)_{t\geq 0}$ the natural history of $N$, that is $\mathcal{F}_t^N:=\sigma\{N( \mathcal T  \times B), \; \mathcal T \subset \mathcal{B}([0,t]), \; B \in \mathcal{B}(\real_+)\}$. The expectation with respect to $\P$ is denoted by $\E[\cdot]$. We also set $\mathcal{F}_\infty^N:=\lim_{t \to +\infty} \mathcal{F}_t^N$.\\\\
\noindent
In order to introduce our add-points operators and the pathwise derivative we introduce some elements of stochastic analysis on the Poisson space. We set :
$$ L^0(\Omega):=\left\{ F:\Omega \to \real, \; \mathcal{F}_\infty^N-\textrm{ measurable}\right\},$$
$$ L^2(\Omega):=\left\{ F \in L^0(\Omega), \; \E[|F|^2] <+\infty\right\}.$$
Let for $j\in \mathbb{N}^*$
\begin{equation}
\label{definition:L2j}
L^2(\mathbb X^j) := \left\{f:\mathbb{X}^j \to \real, \; \int_{\mathbb{X}^j} |f(x_1,\cdots,x_j)|^2 dx_1 \cdots dx_j <+\infty\right\}.
\end{equation}

\begin{definition}[Symmetrization]
\label{defi:symm}
Let $j\in \mathbb N^*$.
\begin{itemize}
\item
For $f$ in $L^2(\mathbb X^j)$, we define $\tilde f$ the symmetrization of $f$ that is the map $\tilde f :\mathbb{X}^j \to \real$ defined as
\begin{equation}
\label{eq:symmetrization}
\tilde f(x_1,\cdots,x_j) := \frac{1}{j!} \sum_{\sigma \in \mathcal S_j} f(x_{\sigma(1)},\cdots,x_{\sigma(j)}),
\end{equation}
where $\mathcal S_j$ denotes the set of all bijections from $\{1,\cdots,j\}$ to itself.
\item A mapping $f$ in $L^2(\mathbb X^j)$ is  said symmetric if $f = \tilde f$ and we set
\begin{equation}
\label{definition:symm2}
L^2_s(\mathbb X^j) := \left\{f \in L^2(\mathbb X^j) \textrm{ and } f \textrm{ is symmetric} \right\}
\end{equation}
the set of symmetric square integrable functions $f$ on $\mathbb{X}^j$.
\end{itemize}
\end{definition}

\noindent
The main ingredient in this paper is  the add-points operators on the Poisson space $\Omega$. 
\begin{definition}$[$Add-points operators$]$\label{definitin:shifts}
\begin{itemize}
\item[(i)]
For $k$ in $\mathbb N^*$, and any subset of $\mathbb X$ of cardinal $k$ denoted $\{x_i, \; i\in \{1,\ldots,k\}\} \subset \mathbb X$, we set the measurable mapping :
\begin{eqnarray*}
\varepsilon_{(x_1,\ldots,x_k)}^{+,k} : \Omega & \longrightarrow & \Omega \\
     \omega & \longmapsto   & \omega + \sum_{i=1}^k \delta_{x_i};
\end{eqnarray*}
with the convention that given a representation of $\omega$ as $\omega=\sum_{i=1}^{n} \delta_{y_i}$ (for some $n\in \mathbb N^*$, $y_i \in \mathbb X$), $\omega + \sum_{i=1}^k \delta_{x_i}$ is understood as follows\footnote{Note that given fixed atoms $(x_1,\ldots,x_n)$, as $\P$ is the Poisson measure on $\Omega$, with $\P$-probability one, marks $x_i$ do not belong to the representation of $\omega$.} :
\begin{equation}
\label{eq:addjumpsum}
\omega + \sum_{i=1}^k \delta_{x_i} :=  \sum_{i=1}^{n} \delta_{y_i} + \sum_{i=1}^k \delta_{x_i} \ind{x_i \neq y_i}.
\end{equation}
\item[(ii)] When $k=1$ we simply write $\varepsilon_{x_1}^{+}:=\varepsilon_{x_1}^{+,1}$.
\end{itemize}
\end{definition}

In this paper we will also make use of a purely deterministic pathwise operator.  

\begin{definition}
\label{definition:patwisederivative}
Let $n\in \mathbb N^*$, and $(x_1,\cdots,x_n) \in \X^n$ with $t_1 < \cdots < t_n$. We set for $F\in L^1(\Omega)$,
$$ \mathcal D_{(x_1,\cdots,x_n)}^n F := \sum_{J\subset \{x_1,\cdots,x_n\}} (-1)^{n-|J|} F(\varpi_{J}),$$
where if $J=\{y_1,\ldots,y_k\}$, $\varpi_{\{y_1,\ldots,y_k\}} := \sum_{i=1}^k \delta_{y_i} \in \Omega$.
\end{definition}
\noindent In particular, even though $F$ is a random variable, $\mathcal D_{(x_1,\cdots,x_n)}^n F$ is a real number as each term $F(\varpi_{J})$ is the evaluation of $F$ at the outcome $\varpi_{J}$.\\\\
\noindent
The decompositions we are going to deal with take the form of iterated stochastic integrals whose definition is made precise in this section.

\begin{notation}
\label{notation:simplex}
For $j\in \mathbb N^*$,  we define the two following sets
\begin{eqnarray*}
\Delta_j&:=&\left\{(x_1,\ldots,x_j) \in \mathbb X^j, \; x_i \neq x_k, \; \forall i\neq k \in \{1,\cdots,j\}\right\},\\
\Delta_{(j)}&:=&\left\{(x_1,\ldots,x_j)=((t_1,\theta_1),\ldots,(t_j,\theta_j)) \in \mathbb X^j, \; t_1<\cdots <t_i<t_{i+1}<\cdots <t_j\right\}.
\end{eqnarray*}
\end{notation}
\newpage
\begin{definition}
\label{definition:interatedPoisson}
Let $j \in \mathbb N^*$.
\begin{itemize}
\item
For $f_j$ an element of $L^2(\mathbb X^j)$ (not necessarily symmetric) we set $\I_j(f_j)$ the $j$th iterated integral of $f_j$ against the Poisson measure defined as : 
$$ \I_j(f_j) :=  \int_{\Delta_j} f_j(x_1,\ldots,x_{j}) N(dx_{1}) \cdots N(dx_j)$$
where each of the integrals above is well-defined pathwise for $\P$-a.e. $\omega \in \Omega$ and where we recall the notation $x_i=(t_i,\theta_i)$ and $dx_i=dt_i \, d\theta_i$.
\item For $f_j$ in $L_s^2(\mathbb X^j)$ (that is a symmetric function according to Definition \ref{defi:symm}), the $j$th iterated integral above can be written as
\begin{align}
\label{eq:Incaln}
& \hspace{-2em} \I_j(f_j) \nonumber \\
&\hspace{-2em}= \int_{\Delta_j}  f_j(x_1,\ldots,x_{j}) N(dx_{1}) N(dx_j) \nonumber \\
&\hspace{-2em}=j! \int_{\mathbb X} \int_{[0,t_{j-1})\times \real_+} \cdots \int_{[0,t_{2})\times \real_+} f_j(x_1,\ldots,x_{j}) N(dx_{1}) \cdots N(dx_j) \nonumber \\
&\hspace{-2em}=j! \int_{\mathbb X} \int_{[0,t_{j-1})\times \real_+} \cdots \int_{[0,t_{2})\times \real_+} f_j((t_1,\theta_1),\ldots,(t_j,\theta_j)) N(d t_1,d\theta_1) \cdots N(d t_j,d\theta_j).
\end{align} 
\item By definition of the symmetrization (see once again Definition \ref{defi:symm}), for any $f_j$ element of $L^2(\mathbb X^j)$ (not necessarily symmetric), 
$$ \I_j(f_j) = \I_j(\tilde f_j). $$
\end{itemize}
\end{definition}

\begin{remark}
Note that our definition coincides with the notion of factorial measures as presented in \cite{Last2016}.
\end{remark}

\noindent We recall the pseudo-chaotic expansion as introduced in \cite{Caroline_Anthony_chaotic}. 

\begin{theorem}[Pseudo-chaotic expansion]
\label{th:chaoticandpseudochaotic}
Let $F$ in $L^2(\Omega)$. 
$F$ is said to admit a pseudo-chaotic expansion if there exists a sequence $(c_j^F)_{j\geq 1}$ with $c^F_j \in L^2_s(\mathbb X^j)$ (see Notation (\ref{definition:symm2})) such that : 
$$ F = \sum_{j=1}^{+\infty} \frac{1}{j!} \I_j(c_j^F).$$
According to \cite[Theorems 3.13 and 3.15]{Caroline_Anthony_chaotic} if such decomposition exists it is unique.
\end{theorem}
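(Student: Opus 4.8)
The plan is to derive an explicit \emph{inversion formula} recovering each kernel $c_n^F$ from $F$, so that uniqueness becomes immediate. Concretely I would show that for $dx$-a.e. $(x_1,\ldots,x_n)\in\Delta_{(n)}$,
\begin{equation}
\label{eq:inversion}
\mathcal D^n_{(x_1,\ldots,x_n)} F = c_n^F(x_1,\ldots,x_n),
\end{equation}
where $\mathcal D^n$ is the pathwise operator of Definition \ref{definition:patwisederivative}. Since $c_n^F\in L^2_s(\X^n)$ is symmetric, it is determined $dx$-a.e. by its restriction to the ordered simplex $\Delta_{(n)}$; hence \eqref{eq:inversion} forces $c_n^F$ to coincide there (and then everywhere) with a quantity depending on $F$ alone, which gives uniqueness.

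The key step is the action of $\mathcal D^n$ on a single iterated integral. Fix $(x_1,\ldots,x_n)\in\Delta_{(n)}$, write $\{x_1,\ldots,x_n\}$ for the underlying point set, and for $J\subset\{x_1,\ldots,x_n\}$ let $\varpi_J=\sum_{y\in J}\delta_y$. For $f_j\in L^2_s(\X^j)$ one has $\I_j(f_j)(\varpi_J)=j!\sum_{S\subset J,\,|S|=j} f_j(S)$, the sum being empty when $j>|J|$ and $f_j(S)$ denoting the (unambiguous, by symmetry) value of $f_j$ at the points of $S$; this is because $\I_j$ integrates over ordered $j$-tuples of distinct atoms. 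Substituting into Definition \ref{definition:patwisederivative} and exchanging the two finite sums gives
\begin{equation}
\label{eq:DnonIj}
\mathcal D^n_{(x_1,\ldots,x_n)}\I_j(f_j) = j!\sum_{\substack{S\subset\{x_1,\ldots,x_n\}\\ |S|=j}} f_j(S)\sum_{S\subset J\subset\{x_1,\ldots,x_n\}}(-1)^{n-|J|}.
\end{equation}
Writing $J=S\cup T$ with $T\subset\{x_1,\ldots,x_n\}\setminus S$ (a set of $n-j$ elements), the inner sum equals $\sum_{T}(-1)^{n-j-|T|}=(1-1)^{n-j}$, which vanishes when $j<n$; for $j=n$ only $S=J=\{x_1,\ldots,x_n\}$ survives, and for $j>n$ there is no admissible $S$. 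Hence $\mathcal D^n_{(x_1,\ldots,x_n)}\I_j(f_j)=n!\,f_n(x_1,\ldots,x_n)\,\ind{j=n}$, and applying $\mathcal D^n$ term by term to $F=\sum_{j\geq1}\frac1{j!}\I_j(c_j^F)$ produces exactly \eqref{eq:inversion}.

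The main obstacle is analytic, not algebraic: $F$ lives in $L^2(\Omega)$, the defining series converges only in $L^2$, and each $\varpi_J$ is a $\P$-null configuration, so the pointwise evaluations $F(\varpi_J)$ underlying Definition \ref{definition:patwisederivative} — and the termwise interchange above — are not automatically legitimate. Making \eqref{eq:inversion} rigorous requires selecting a canonical representative of $F$ for which evaluations at finite configurations are well defined and for which $\mathcal D^n$ commutes with the infinite sum; this is precisely the content of \cite[Theorems 3.13 and 3.15]{Caroline_Anthony_chaotic}, which we invoke. As a cross-check one may instead expand each $\I_j(c_j^F)$ into genuine compensated multiple integrals of orders $k\le j$ via the moment/Mecke formulas, regroup into a classical orthogonal chaos expansion $F=\sum_k\widehat I_k(g_k)$, use uniqueness of the latter, and invert the resulting infinite triangular system to recover $(c_j^F)_{j\geq1}$; the delicate point there is again the convergence of the triangular inversion, which reduces to the same estimates.
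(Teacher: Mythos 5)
You should first note that the paper itself does not prove this statement: Theorem \ref{th:chaoticandpseudochaotic} is a definition of the pseudo-chaotic expansion together with a uniqueness claim that is imported verbatim from \cite[Theorems 3.13 and 3.15]{Caroline_Anthony_chaotic}, so there is no in-paper argument to compare against. Your algebraic core is correct and is indeed the mechanism behind the cited result: for $\varpi_J$ a finite configuration one has $\I_j(f_j)(\varpi_J)=j!\sum_{S\subset J,\,|S|=j}f_j(S)$ for symmetric $f_j$, and the binomial cancellation in your computation of $\mathcal D^n_{(x_1,\ldots,x_n)}\I_j(f_j)$ is right, giving $n!\,f_n(x_1,\ldots,x_n)\ind{j=n}$ on the ordered simplex; restricting a symmetric kernel to $\Delta_{(n)}$ does determine it $dx$-a.e.

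The gap is exactly where you place it, and your way of closing it makes the argument circular rather than complete. The random variable $F$ is only an $L^2(\P)$-equivalence class and every finite configuration $\varpi_J$ is $\P$-null, so the evaluations $F(\varpi_J)$ entering Definition \ref{definition:patwisederivative}, and the termwise application of $\mathcal D^n$ to the $L^2$-convergent series, are meaningful only after one has selected a representative of $F$ that coincides with the pointwise (finite, since $\I_j(c_j)(\varpi_J)=0$ for $j>|J|$) sum at such configurations. Establishing that identification is precisely the content of the theorems of \cite{Caroline_Anthony_chaotic} that the statement cites; invoking them at this point means you are assuming the result you set out to prove, so as a blind, self-contained proof the proposal does not go beyond the paper's own citation. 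A genuinely independent route would have to show, for instance, that if $\sum_{j\geq 1}\frac{1}{j!}\I_j(c_j)=0$ in $L^2(\Omega)$ then all $c_j$ vanish a.e., which cannot be read off from values on null configurations alone; one would instead argue through Mecke-type moment identities (Proposition \ref{prop:Mecke}) or carry out, with actual convergence estimates, the triangular inversion against the genuine chaotic expansion that you only sketch as a cross-check.
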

\noindent
We finally recall the following lemma which is a simple consequence of Mecke's formula with our notations (we refer to \textit{e.g.} Relation (11) in \cite{Last2016}). 

\begin{prop}
\label{prop:Mecke}
Let $j\in \mathbb N^*$, $c_j \in L^2_s(\mathbb X^j)$ and $F \in L^2(\Omega)$. Then
\begin{equation}
\label{eq:Mecke}
\E\left[F \; \I_j(c_j) \right] = \int_{\X^j} \E\left[F \circ \varepsilon_{(x_1,\ldots,x_j)}^{+,j} \right] c_j(x_1,\ldots,x_j) dx_1\cdots dx_j.
\end{equation}
In particular taking $F = 1$ we have that 
\begin{equation}
\label{eq:expectIj}
\E\left[\I_j(c_j) \right] = \int_{\X^j} c_j(x_1,\ldots,x_j) dx_1\cdots dx_j.
\end{equation}
\end{prop}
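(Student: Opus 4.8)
The plan is to read off the identity directly from the multivariate Mecke equation for the factorial measures of the Poisson process $N$, as recorded in \cite{Last2016}. By the Remark following Definition~\ref{definition:interatedPoisson}, the iterated integral $\I_j(c_j)$ is nothing but the integral of $c_j$ against the $j$-th factorial measure $N^{(j)}$ of $N$, i.e. $\I_j(c_j)=\int_{\X^j} c_j(x_1,\ldots,x_j)\,N^{(j)}(d(x_1,\ldots,x_j))$, the factorial measure being the restriction of $N^{\otimes j}$ to the set $\Delta_j$ of pairwise distinct tuples. So the whole statement is essentially a translation of the (multivariate) Mecke formula into the notation of this paper.

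First I would check integrability so that the $L^1$ form of the multivariate Mecke equation is applicable. Since $c_j\in L^2_s(\X^j)$, the multiple integral $\I_j(c_j)$ lies in $L^2(\Omega)$ (its second moment being controlled by $\|c_j\|_{L^2(\X^j)}^2$ up to a combinatorial constant), hence $F\,\I_j(c_j)\in L^1(\Omega)$ for $F\in L^2(\Omega)$ by Cauchy--Schwarz. One then applies the multivariate Mecke equation for factorial measures (cf. Relation~(11) in \cite{Last2016}), first for $F$ non-negative and then in general by splitting $F=F^+-F^-$, to the kernel $(x_1,\ldots,x_j;\omega)\mapsto F(\omega)\,c_j(x_1,\ldots,x_j)$. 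Pulling the deterministic factor $c_j$ out of the inner expectation, this gives
\begin{align*}
\E\left[F\,\I_j(c_j)\right]
&=\E\left[\int_{\X^j} F\,c_j(x_1,\ldots,x_j)\,N^{(j)}(d(x_1,\ldots,x_j))\right]\\
&=\int_{\X^j} \E\left[F\big(N+\sum_{i=1}^j\delta_{x_i}\big)\right]c_j(x_1,\ldots,x_j)\,dx_1\cdots dx_j.
\end{align*}

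It then remains to identify $F\big(N+\sum_{i=1}^j\delta_{x_i}\big)$ with $F\circ\varepsilon^{+,j}_{(x_1,\ldots,x_j)}$. By Definition~\ref{definitin:shifts} these agree whenever the points $x_1,\ldots,x_j$ are pairwise distinct and none of them is already an atom of $\omega$; by the footnote to Definition~\ref{definitin:shifts} (a consequence of $\P$ being the Poisson measure) the set of exceptional tuples is $\P\otimes dx_1\cdots dx_j$-null, so it does not affect the outer Lebesgue integral. This yields Relation~(\ref{eq:Mecke}), and taking $F\equiv 1$, for which $\E[1\circ\varepsilon^{+,j}_{(x_1,\ldots,x_j)}]=1$, gives Relation~(\ref{eq:expectIj}). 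The only real point requiring care is this last bookkeeping step — reconciling the add-point convention of Definition~\ref{definitin:shifts} (which discards marks already present) with the plain superposition $N+\sum_i\delta_{x_i}$ appearing in Mecke's formula — together with the preliminary integrability check that licenses the $L^1$ version of the equation; since the two operators coincide off a null set for the relevant product measure, no genuine obstacle arises and the proof is short.
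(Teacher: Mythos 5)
Your proof is correct and follows exactly the route the paper intends: the paper gives no detailed argument, merely declaring the proposition "a simple consequence of Mecke's formula" with a pointer to Relation (11) in \cite{Last2016}, which is precisely the multivariate Mecke equation for factorial measures that you invoke. Your additional care about integrability and about reconciling the add-point convention of Definition \ref{definitin:shifts} with the plain superposition $N+\sum_i\delta_{x_i}$ (a $\P\otimes dx$-null discrepancy) simply makes explicit what the paper leaves implicit.
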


\section{Pseudo-chaotic expansion for the Hawkes process}
\label{section:Hawkes}

Our approach relies on a specific representation of the Hawkes process with respect to the enlarged Poisson noise $N$, see \textit{e.g.} \cite[Corollary 2.7]{Hillairet_Reveillac_Rosenbaum} known under the name of Poisson imbedding \cite{Bremaud_Massoulie}.

\begin{prop}
\label{prop:systemHawkesdeterministe}
Let $\Phi$ as in Assumption \ref{assumption:Phi} and $\mu>0$. The SDE below admits a unique solution $(\lambda_t)_{t\geq 0}$ :
\begin{equation}
\label{eq:EDOlambda}
\lambda_ t = \mu + \int_{(0,t)\times \real_+} \Phi(t-s) \ind{\theta\leq \lambda_s} N(ds,d\theta), \quad t \geq 0;
\end{equation}
and a Hawkes process  $H$  with intensity $\lambda$ (characterized by the parameters ($\mu, \Phi$))  can be represented as 
\begin{equation}
\label{eq:EDOH} 
H_t =\int_{(0,t]\times \real_+} \ind{\theta\leq \lambda_s} N(ds,d\theta), \quad t \geq 0.
\end{equation}
\end{prop}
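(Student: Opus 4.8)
The plan is to realise the pair $(\lambda,H)$ on the Poisson space $(\Omega,\mathcal F,\P)$ with $\mathbb F=\mathbb F^N$ through the classical thinning/acceptance construction (in the spirit of Ogata's algorithm and in the form of \cite{Bremaud_Massoulie}), and to rule out explosion by means of the first-order estimate for $\E[H]$. Throughout, a solution of (\ref{eq:EDOlambda}) is taken to be a nonnegative $\mathbb F^N$-predictable process, so that the compensation (Mecke) formula of Proposition \ref{prop:Mecke}, in its standard extension to $\mathbb F^N$-predictable integrands, turns $N(ds,d\theta)$ into its compensator $ds\,d\theta$ when integrated against $(s,\theta)\mapsto\ind{\theta\le\lambda_s}$ inside an expectation. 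I expect the only genuinely delicate point to be non-explosion: since $\Phi\in L^1$ need not be bounded, the graph of $\lambda$ may be unbounded, so one cannot restrict $N$ to a fixed strip $[0,T]\times[0,M]$; this is precisely where Assumption \ref{assumption:Phi} ($\|\Phi\|_1<1$) is used, and everything else is bookkeeping.

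\emph{Existence.} Given a configuration $\omega$ I build an increasing sequence of accepted atoms. Set $\lambda_t:=\mu$ for $t\le T_1$, where $T_1:=\inf\{t>0:\ \theta\le\mu\ \text{for some atom }(t,\theta)\text{ of }\omega\}$; inductively, having obtained $T_1<\cdots<T_n$ and $\lambda$ on $[0,T_n]$ via $\lambda_t=\mu+\sum_{i:\,T_i<t}\Phi(t-T_i)$, set
$$ T_{n+1}:=\inf\Big\{t>T_n:\ \theta\le\mu+\textstyle\sum_{i=1}^n\Phi(t-T_i)\ \text{for some atom }(t,\theta)\text{ of }\omega\Big\} $$
(with $\inf\emptyset=+\infty$) and extend $\lambda$ by the same formula. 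On any interval $(T_n,T]$ the region below the graph of $t\mapsto\mu+\sum_{i=1}^n\Phi(t-T_i)$ has finite Lebesgue measure $\le\mu T+n\|\Phi\|_1$, hence $\P$-a.s. carries finitely many atoms of $\omega$; therefore each infimum above is either $+\infty$ or attained at a genuine atom, the recursion is well posed, and it produces a process $\lambda$ on $[0,\zeta)$ with $\zeta:=\lim_n T_n$ which solves (\ref{eq:EDOlambda}) there by construction.

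\emph{Non-explosion and the Hawkes property.} Put $H_t:=\int_{(0,t]\times\real_+}\ind{\theta\le\lambda_s}N(ds,d\theta)$, so that $H_0=0$ and, on $[0,\zeta)$, $H$ is a counting process with jump times $(T_i)$ and with $\int_0^{\cdot}\lambda_s\,ds$ as $\mathbb F^N$-compensator ($\lambda$ being left-continuous and $\mathbb F^N$-adapted). Fubini's theorem and (\ref{eq:EDOlambda}) give, for $t<\zeta$,
$$ \int_0^t\lambda_s\,ds=\mu t+\int_{(0,t)}\Big(\int_u^t\Phi(s-u)\,ds\Big)dH_u\le\mu t+\|\Phi\|_1\,H_t, $$
so stopping at $\tau_M:=\inf\{t:H_t\ge M\}$ and applying the compensation formula yields $\E[H_{t\wedge\tau_M}]=\E\big[\int_0^{t\wedge\tau_M}\lambda_s\,ds\big]\le\mu t+\|\Phi\|_1\,\E[H_{t\wedge\tau_M}]$, hence $\E[H_{t\wedge\tau_M}]\le\mu t/(1-\|\Phi\|_1)$ uniformly in $M$. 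Letting $M\to+\infty$ (monotone convergence, $\tau_M\uparrow\zeta$) gives $\E[H_{t\wedge\zeta}]<+\infty$, which forces $\zeta>t$ $\P$-a.s.; $t$ being arbitrary, $\zeta=+\infty$ $\P$-a.s., so $\lambda$ is defined on all of $\real_+$ and $(\lambda,H)$ satisfies (\ref{eq:EDOlambda})--(\ref{eq:EDOH}). Integrating (\ref{eq:EDOlambda}) in $\theta$ gives $\lambda_t=\mu+\int_{(0,t)}\Phi(t-s)\,dH_s$, and for $0\le s\le t$ and $A\in\mathcal F^N_s$ the compensation formula gives $\E[\1_A(H_t-H_s)]=\E\big[\int_{(s,t]}\int_{\real_+}\1_A\ind{\theta\le\lambda_r}\,d\theta\,dr\big]=\E\big[\int_{(s,t]}\1_A\lambda_r\,dr\big]$, i.e. $H$ is a Hawkes process with parameters $(\mu,\Phi)$ in the sense of Definition \ref{def:standardHawkes}.

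\emph{Uniqueness.} If $\lambda^1,\lambda^2$ are two solutions driven by the same $N$, the estimate above applies verbatim to each $H^i_t:=\int_{(0,t]\times\real_+}\ind{\theta\le\lambda^i_s}N(ds,d\theta)$ and gives $\int_0^T\E[\lambda^i_t]\,dt=\E[H^i_T]\le\mu T/(1-\|\Phi\|_1)<+\infty$. Using (\ref{eq:EDOlambda}), $\Phi\ge0$, and $\int_{\real_+}|\ind{\theta\le a}-\ind{\theta\le b}|\,d\theta=|a-b|$ for $a,b\ge0$, one obtains, with $g(t):=\E[|\lambda^1_t-\lambda^2_t|]$,
$$ g(t)\le\E\Big[\int_{(0,t)\times\real_+}\Phi(t-s)\,|\ind{\theta\le\lambda^1_s}-\ind{\theta\le\lambda^2_s}|\,N(ds,d\theta)\Big]=\int_0^t\Phi(t-s)\,g(s)\,ds, $$
so by iteration $g\le\Phi_n\ast g$ for every $n$, whence $\|g\|_{L^1([0,T])}\le\|\Phi_n\|_1\|g\|_{L^1([0,T])}=\|\Phi\|_1^{\,n}\|g\|_{L^1([0,T])}\to0$ by Proposition \ref{prop:Phin}; since $\|g\|_{L^1([0,T])}<+\infty$, $g\equiv0$ and thus $\lambda^1_t=\lambda^2_t$ $\P$-a.s. for every $t$. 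As both solutions are pathwise of the form $\mu+\sum_i\Phi(\cdot-T_i)$ with summation over the strict past, they are determined by their (countable) jump sets, hence indistinguishable, and the associated counting processes coincide as well.
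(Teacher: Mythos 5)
Your proposal is correct, but note that the paper itself does not prove this proposition: it is imported from the literature (the Poisson imbedding of \cite{Bremaud_Massoulie} and \cite[Corollary 2.7]{Hillairet_Reveillac_Rosenbaum}), so there is no in-paper argument to compare against. What you wrote is essentially the standard proof from those references: a pathwise thinning construction of the accepted atoms (well posed because the region under the current intensity curve on any bounded time interval has finite Lebesgue measure, hence carries finitely many Poisson atoms), non-explosion via the a priori bound $\int_0^t\lambda_s\,ds\le \mu t+\|\Phi\|_1 H_t$ combined with the compensation formula and $\|\Phi\|_1<1$, and uniqueness by the $L^1$ contraction $g\le \Phi\ast g$ with $\|\Phi_n\|_1=\|\Phi\|_1^n\to 0$. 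All the key steps are sound; the identification of the compensator and the verification of Definition \ref{def:standardHawkes} are handled correctly. One point worth tightening: at the end of uniqueness you pass from $\E[\,|\lambda^1_t-\lambda^2_t|\,]=0$ for (a.e., then every) fixed $t$ to indistinguishability by appealing to the jump-set representation, but the cleanest bridge is one more application of the Mecke formula, namely
\begin{equation*}
\E\left[\int_{(0,T]\times \real_+} \bigl|\ind{\theta\le\lambda^1_s}-\ind{\theta\le\lambda^2_s}\bigr|\, N(ds,d\theta)\right]=\int_0^T \E\bigl[\,|\lambda^1_s-\lambda^2_s|\,\bigr]\,ds=0,
\end{equation*}
which shows the two solutions accept exactly the same atoms almost surely and hence coincide pathwise, together with $H^1=H^2$. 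With that sentence added the argument is complete and self-contained.
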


\noindent
In order to perform our computations for both $H$ and $\lambda$ we collect them in a unique notation.

\begin{notation}
\label{notationX}
Given $\zeta \equiv \Phi$ or $\zeta \equiv 1$  we set : 
$$ X^{\zeta}_t := \int_{(0,t)\times \real_+} \zeta(t-s) \ind{\theta\leq \lambda_s} N(ds,d\theta), \quad t \geq 0,$$
$$
\mbox{ so that  } \quad X_t^\zeta = \left\lbrace \begin{array}{l} \lambda_t - \mu, \quad \textrm{ if } \zeta(u)=\Phi(u) \\ \\ H_t, \quad \textrm{ if } \zeta(u)=1. \\\end{array}\right.
$$
\end{notation}

\subsection*{Elements on Volterra integral equations}
\label{section:Volterra}

For $f,g$ in $L^1(\real_+;dt)$ we define the convolution of $f$ and $g$ by 
$$(f\ast g)(t):=\int_0^t f(t-u) g(u) du, \quad t \geq 0.$$
This allows one to solve a linear Volterra integral equation as follows.

\begin{lemma}[See Lemma 5 in \cite{Bacryetal2013}]
\label{lemma:Bacryetal}
For $g$ locally bounded, the unique solution $f_g$ to the equation: 
$$ f_g(t) = g(t) + \int_0^t \Phi(t-s) f_g(s) ds, \quad t\geq 0, $$
is given by 
$$ f_g(t) = g(t) + \int_0^t \Psi(t-s) g(s) ds, \quad t \geq0. $$ 
\end{lemma}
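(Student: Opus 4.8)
The plan is to proceed in two moves: first exhibit $f_g := g + \Psi \ast g$ as a solution, then establish uniqueness by a contraction-type estimate. Since $g$ is locally bounded and $\Psi \in L^1(\real_+;dt)$ by Proposition \ref{prop:Phin}, the map $t \mapsto \int_0^t \Psi(t-s) g(s)\,ds$ is well-defined and locally bounded, hence so is the candidate $f_g$; this also confirms we are searching within the right function class.

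To check that $f_g$ solves the equation, I would substitute it into the right-hand side and use the \emph{resolvent identity} $\Psi = \Phi + \Phi \ast \Psi$. This identity follows from the definition \eqref{eq:Phin} of the iterated kernels, which gives $\Phi \ast \Phi_n = \Phi_{n+1}$, together with the fact that $\Psi = \sum_{n \ge 1} \Phi_n$ converges in $L^1$ (Proposition \ref{prop:Phin}), so that $\Phi \ast \Psi = \sum_{n\ge 1}\Phi \ast \Phi_n = \sum_{n \ge 1}\Phi_{n+1} = \Psi - \Phi$. Then, for every $t \ge 0$,
$$ g(t) + \int_0^t \Phi(t-s) f_g(s)\,ds = g(t) + (\Phi \ast g)(t) + (\Phi \ast \Psi \ast g)(t) = g(t) + \big( (\Phi + \Phi\ast\Psi)\ast g\big)(t) = g(t) + (\Psi \ast g)(t) = f_g(t), $$
so $f_g$ is a solution. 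For uniqueness, suppose $f_1, f_2$ are two locally bounded solutions and set $h := f_1 - f_2$, which is locally bounded and satisfies $h = \Phi \ast h$. Iterating, $h = \Phi_n \ast h$ for every $n \ge 1$, hence for any $T>0$ and $t \in [0,T]$, $|h(t)| \le \|\Phi_n\|_1\, \|h\|_{L^\infty([0,T])} = \|\Phi\|_1^n\, \|h\|_{L^\infty([0,T])}$. Letting $n \to \infty$ and using $\|\Phi\|_1 < 1$ (Assumption \ref{assumption:Phi}) gives $h \equiv 0$ on $[0,T]$, and since $T$ is arbitrary, $f_1 = f_2$.

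The only genuinely delicate point is the exchange of the infinite summation with the convolution integral, i.e.\ that $(\Psi\ast g)(t) = \sum_{n\ge1}(\Phi_n \ast g)(t)$ and $(\Phi\ast\Psi)\ast g = \sum_{n\ge 1}\Phi_{n+1}\ast g$; both are controlled uniformly on compact time intervals by the local boundedness of $g$ and the summability $\sum_n \|\Phi_n\|_1 = \sum_n\|\Phi\|_1^n < \infty$ from Proposition \ref{prop:Phin}, since on $[0,t]$ the tail of either series is bounded by $\|g\|_{L^\infty([0,t])}\,\big\|\sum_{n > N}\Phi_n\big\|_1 \to 0$. Alternatively, one can first \emph{derive} the formula by Picard iteration — writing $f_g = g + \Phi\ast f_g = g + \Phi\ast g + \Phi_2 \ast f_g = \cdots = g + \sum_{n=1}^{N}\Phi_n \ast g + \Phi_{N+1}\ast f_g$ and sending $N\to\infty$, the remainder $\Phi_{N+1}\ast f_g$ vanishing on compacts because $\|\Phi_{N+1}\|_1 = \|\Phi\|_1^{N+1}\to 0$ and $f_g$ is locally bounded — which yields both existence of the stated form and motivates the resolvent identity used above.
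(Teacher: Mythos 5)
Your proof is correct, but note that the paper itself does not prove Lemma \ref{lemma:Bacryetal}: it is quoted verbatim from Lemma 5 of \cite{Bacryetal2013}, so there is no internal argument to compare against beyond that citation. What you have written is essentially the standard resolvent-kernel proof underlying the cited result: existence of the solution $f_g=g+\Psi\ast g$ via the resolvent identity $\Psi=\Phi+\Phi\ast\Psi$ (which indeed follows from $\Phi\ast\Phi_n=\Phi_{n+1}$ and the $L^1$-convergence of $\sum_n\Phi_n$ in Proposition \ref{prop:Phin}, the interchange of sum and convolution being licensed by nonnegativity or by the geometric bound $\|\Phi_n\|_1=\|\Phi\|_1^n$), and uniqueness via the iteration $h=\Phi_n\ast h$ together with $\|\Phi\|_1<1$ from Assumption \ref{assumption:Phi}. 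Two minor points are worth making explicit if this were to be written out in full: the identity $\Psi=\Phi+\Phi\ast\Psi$ and the associativity $\Phi\ast(\Psi\ast g)=(\Phi\ast\Psi)\ast g$ hold after an application of Fubini--Tonelli, which is justified here because the kernels are nonnegative and integrable and $g$ is locally bounded (an a.e.\ identity between kernels suffices, since they only ever appear integrated against $g$); and the uniqueness you obtain is uniqueness within the class of locally bounded measurable functions, which is exactly the class in which the lemma (and its use in Proposition \ref{prop:expectlambdashifted}, where $g=\mu^{t_1,\cdots,t_n}$ is locally bounded) is applied, so the restriction is harmless. Your closing Picard-iteration remark, with remainder $\Phi_{N+1}\ast f_g$ controlled by $\|\Phi\|_1^{N+1}$, is a clean alternative derivation of the same formula.
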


\noindent
We also recall \cite[Lemma 5.4]{Caroline_Anthony_chaotic}.

\begin{lemma}
\label{lemma:magic}
Let $f$ in $L_1(\real_+;dt)$. For any $n\in \mathbb{N}$ with $n \geq 3$, and for any $0\leq s \leq t$,
\begin{equation}
\label{eq:magic}
\int_s^t \int_s^u \Phi_{n-1}(u-r) f(r) dr du = \int_s^{t} \int_s^{v_n} \int_s^{v_{n-1}} \cdots \int_s^{v_2} \prod_{i=2}^{n} \Phi(v_{i}-v_{i-1}) f({v_1}) dv_1 \cdots dv_{n}.
\end{equation} 
In particular taking $f=1$, 
$$ \int_s^u \Phi_{n-1}(u-r) dr = \int_s^{u} \int_s^{v_{n-1}} \cdots \int_s^{v_2} \Phi(u-v_{n-1}) \prod_{i=2}^{n-1} \Phi(v_{i}-v_{i-1}) dv_1 \cdots dv_{n-1}, \quad \textrm{ for a.a. } u\in \real_+. $$
\end{lemma}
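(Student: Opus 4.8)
The plan is to prove \eqref{eq:magic} by reducing it, via Fubini's theorem, to the classical description of an iterated convolution as an integral over an ordered simplex. \textbf{Step 1.} First I would establish, by induction on $m\ge 2$, that for all $0\le a\le b$
\[ \Phi_m(b-a)=\int_{a\le u_1\le\cdots\le u_{m-1}\le b}\Phi(b-u_{m-1})\,\Phi(u_{m-1}-u_{m-2})\cdots\Phi(u_2-u_1)\,\Phi(u_1-a)\,du_1\cdots du_{m-1}. \]
The base case $m=2$ is $\Phi_2(b-a)=\int_a^b\Phi(b-u_1)\Phi(u_1-a)\,du_1$, obtained from definition \eqref{eq:Phin} by the substitution $u_1=a+\sigma$. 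For the inductive step one writes $\Phi_m(t)=\int_0^t\Phi_{m-1}(t-\sigma)\Phi(\sigma)\,d\sigma$ (commutativity of convolution), substitutes $v=a+\sigma$ to obtain $\Phi_m(b-a)=\int_a^b\Phi_{m-1}(b-v)\Phi(v-a)\,dv$, plugs in the induction hypothesis for $\Phi_{m-1}(b-v)$, and relabels the $m-1$ integration variables. All integrands being nonnegative, Tonelli's theorem justifies each manipulation.

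\textbf{Step 2.} Next I would insert the formula of Step 1 with $m=n-1$, $a=r$, $b=u$ into the left-hand side of \eqref{eq:magic}. This rewrites it as the integral of $\Phi(u-u_{n-2})\cdots\Phi(u_1-r)\,f(r)$ over $\{\,s\le r\le u\le t,\ r\le u_1\le\cdots\le u_{n-2}\le u\,\}$, which is precisely the ordered simplex $\{\,s\le r\le u_1\le\cdots\le u_{n-2}\le u\le t\,\}$. Relabelling $(r,u_1,\dots,u_{n-2},u)$ as $(v_1,v_2,\dots,v_{n-1},v_n)$, the product of kernels is $\prod_{i=2}^{n}\Phi(v_i-v_{i-1})$, and re-expressing the ordered domain as the nested integral $\int_s^t\int_s^{v_n}\cdots\int_s^{v_2}$ yields exactly the right-hand side of \eqref{eq:magic}. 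For $f$ merely in $L^1(\real_+;dt)$, the absolute integrability needed to apply Fubini on the bounded simplex $\{s\le v_1\le\cdots\le v_n\le t\}$ follows from $\Phi_{n-1}\in L^1(\real_+)$ (Proposition \ref{prop:Phin}) and $f\in L^1$.

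\textbf{Step 3.} Finally, the identity just proved has the form $\int_s^t A(u)\,du=\int_s^t B(u)\,du$ for every $t\ge s$, with $A(u)=\int_s^u\Phi_{n-1}(u-r)f(r)\,dr$ and $B(u)=\int_s^u\int_s^{v_{n-1}}\cdots\int_s^{v_2}\prod_{i=2}^{n}\Phi(v_i-v_{i-1})f(v_1)\,dv_1\cdots dv_{n-1}$, where $v_n:=u$. Since $A,B\in L^1_{\mathrm{loc}}(\real_+)$, this forces $A(u)=B(u)$ for a.a.\ $u$; taking $f\equiv 1$ and pulling the factor $\Phi(v_n-v_{n-1})=\Phi(u-v_{n-1})$ out of the remaining integrals gives the ``in particular'' statement.

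I expect the only real difficulty to be the bookkeeping — matching nested integration limits across the Fubini rearrangement in Step 2, and keeping the variable labels consistent through the induction in Step 1. There is no analytic subtlety beyond the routine uses of Tonelli and Fubini, which are available because $\Phi\ge 0$ and $f$ is integrable on bounded intervals.
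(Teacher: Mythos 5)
Your proposal is correct, but it is more self-contained than what the paper actually does. The paper does not prove \eqref{eq:magic} at all: it simply quotes it from \cite[Lemma 5.4]{Caroline_Anthony_chaotic}, and its entire proof consists of the final step you also carry out, namely viewing the two sides of \eqref{eq:magic} as Borel measures in the upper limit $t$ and concluding that their densities agree almost everywhere, which gives the ``in particular'' statement for $f\equiv 1$. Your Steps 1--2 supply a genuine proof of the quoted identity: the induction showing that $\Phi_m(b-a)$ equals the integral of the telescoping product $\Phi(b-u_{m-1})\cdots\Phi(u_1-a)$ over the ordered simplex (base case $m=2$ from definition \eqref{eq:Phin}, inductive step via commutativity of convolution and a translation), followed by the Fubini rearrangement identifying $\{s\le r\le u\le t,\ r\le u_1\le\cdots\le u_{n-2}\le u\}$ with the nested domain on the right-hand side of \eqref{eq:magic}; your integrability check $\int_s^t\int_s^u\Phi_{n-1}(u-r)|f(r)|\,dr\,du\le\|\Phi\|_1^{n-1}\|f\|_{L^1}$ (using Proposition \ref{prop:Phin}) correctly justifies Fubini for signed $f\in L^1$, while Tonelli suffices in Step 1 since $\Phi\ge 0$. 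Your Step 3 coincides with the paper's argument (note only that $f\equiv 1$ is not in $L^1(\real_+)$, so one should observe, as you implicitly do, that the identity and the local-integrability argument are carried out on the bounded window $[s,t]$). In short: the paper's route buys brevity by outsourcing the combinatorial identity to the companion paper, whereas your route makes the lemma independent of that reference at the cost of the bookkeeping in the induction and relabelling.
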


\begin{proof}
As mentioned, Relation (\ref{eq:magic}) is given in \cite[Lemma 5.4]{Caroline_Anthony_chaotic}. Set $M$ and $P$ the Borelian measures on $\real_+$,
$$M([0,t]):=\int_s^t \int_s^u \Phi_{n-1}(u-r) dr du;$$
$$P([0,t]):=\int_s^{t} \int_s^{v_n} \int_s^{v_{n-1}} \cdots \int_s^{v_2} \prod_{i=2}^{n} \Phi(v_{i}-v_{i-1}) f({v_1}) dv_1 \cdots dv_{n}.$$
Relation (\ref{eq:magic}) entails that $M=P$ leading to the equality in $L^1(\real_+)$ of their densities with respect to  the Lebesgue measure.  
\end{proof}

\subsection*{Pseudo-chaotic expansion of the Hawkes process}

We follow \cite{Caroline_Anthony_chaotic} to obtain the so-called pseudo-chaotic expansion for the Hawkes process. It relies on the iterated integrals $\I_n$ and on the pathwise derivative operators $\mathcal D^n$ respectively introduced in Definitions \ref{definition:interatedPoisson} and \ref{definition:patwisederivative}. 

\begin{prop}
\label{prop:pseudochageneral}
Let $\zeta \equiv \Phi$ or $\zeta \equiv 1$ and recall Notation \ref{notationX}. Let $t\geq 0$. Then $X^\zeta_t$ admits the pseudo-chaotic expansion below  
$$ X^\zeta_t = \sum_{n=1}^{+\infty} \frac{1}{n!} \I_n(c_n^{\zeta,t}),$$
with for all $ (x_1,\cdots,x_n) \in ([0,t]\times \real_+)^n$
$$ c_n^{\zeta,t}(x_1,\cdots,x_n)=\mathcal D_{(x_{(1)},\cdots,x_{(n)})}^n X_t^\zeta =\zeta(t-t_n) \mathcal D_{(x_{(1)},\cdots,x_{(n-1)})}^{n-1} \ind{\theta_{(n)} \leq \lambda_{(t_n)}}.$$  
\end{prop}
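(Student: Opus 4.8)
The plan is to establish the pseudo-chaotic expansion of $X_t^\zeta$ by induction-style iteration on the Poisson imbedding SDE, following the strategy of \cite{Caroline_Anthony_chaotic}. First I would recall that, by Proposition \ref{prop:systemHawkesdeterministe} and Notation \ref{notationX}, the random variable $X_t^\zeta = \int_{(0,t)\times\real_+} \zeta(t-s)\ind{\theta\leq\lambda_s} N(ds,d\theta)$ is in $L^2(\Omega)$ (finiteness of the second moment follows from Assumption \ref{assumption:Phi} together with the boundedness of moments of the Hawkes process, e.g. via \cite{Gaoetal}), hence by Theorem \ref{th:chaoticandpseudochaotic} a pseudo-chaotic expansion is unique if it exists. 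The heart of the matter is to identify the kernels $c_n^{\zeta,t}$, and for this the key observation is the identity $c_n^{\zeta,t}(x_1,\dots,x_n) = \mathcal D_{(x_{(1)},\dots,x_{(n)})}^n X_t^\zeta$, i.e. the $n$th kernel is the $n$th pathwise difference operator $\mathcal D^n$ of Definition \ref{definition:patwisederivative} applied to $X_t^\zeta$ and evaluated at the $t$-ordered configuration. This is exactly the pseudo-chaotic analogue of the Stroock formula, and I would either cite the general version from \cite{Caroline_Anthony_chaotic} or reprove it here: write $F = X_t^\zeta$, expand $\I_n(c_n)$ against $N$ on the simplex $\Delta_{(n)}$, and check by Mecke's formula (Proposition \ref{prop:Mecke}) and an inclusion-exclusion unwinding that $\E[F\,\I_n(c_n^F)] $ produces precisely the prescribed coefficients, so that the two pseudo-chaotic expansions agree and uniqueness closes the argument.

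Next I would carry out the explicit computation of $\mathcal D^n_{(x_{(1)},\dots,x_{(n)})} X_t^\zeta$ to obtain the factorized form in the statement. The point is that adding a single atom $x_n=(t_n,\theta_n)$ with $t_n$ the largest time coordinate contributes a term $\zeta(t-t_n)\ind{\theta_n\leq\lambda_{t_n}}$, where $\lambda_{t_n}$ is evaluated on the configuration obtained by adding the remaining atoms with smaller times; since $\zeta(t-t_n)$ does not itself depend on the configuration, it factors out of the remaining difference operator, giving
$$\mathcal D_{(x_{(1)},\dots,x_{(n)})}^n X_t^\zeta = \zeta(t-t_n)\,\mathcal D_{(x_{(1)},\dots,x_{(n-1)})}^{n-1} \ind{\theta_{(n)}\leq\lambda_{(t_n)}}.$$
Here the notation $\lambda_{(t_n)}$ refers to the intensity at the largest time among the added points, and the derivative $\mathcal D^{n-1}$ acts on the dependence of that indicator on the configuration through $\lambda$. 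The technical care needed is to justify that in the alternating sum defining $\mathcal D^n$, all $2^n$ subsets $J$ group so that the terms containing $x_n$ and those not containing it combine into a single difference in the last variable of the quantity $\ind{\theta_n\leq\lambda_{t_n}(\varpi_J)}$; this uses that $\lambda_s(\varpi)$ only depends on atoms of $\varpi$ with time-coordinate strictly less than $s$, which is precisely the predictability built into \eqref{eq:EDOlambda}.

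I expect the main obstacle to be twofold. First, the rigorous justification that $X_t^\zeta$ genuinely \emph{admits} a pseudo-chaotic expansion (as opposed to merely lying in $L^2$): pseudo-chaotic expansions need not exist for arbitrary $L^2$ functionals, so one must exhibit the kernels $c_n^{\zeta,t}$ and verify both that each $c_n^{\zeta,t}\in L^2_s(\mathbb X^n)$ — which requires $L^1$-control of iterated convolutions $\Phi_n$ via Proposition \ref{prop:Phin} and uniform moment bounds on $\lambda$ — and that the series $\sum_n \frac1{n!}\I_n(c_n^{\zeta,t})$ converges in $L^2(\Omega)$ to $X_t^\zeta$. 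Second, the combinatorial bookkeeping in the symmetrization: the kernels are naturally defined on the ordered simplex $\Delta_{(n)}$ via $\mathcal D^n_{(x_{(1)},\dots,x_{(n)})}$, and one must check the resulting map, once symmetrized, is what appears in $\I_n$; the identity $\I_n(f)=\I_n(\tilde f)$ from Definition \ref{definition:interatedPoisson} handles this but the reindexing $x_{(i)}$ needs to be threaded carefully. Both difficulties are, however, resolved by the results of \cite{Caroline_Anthony_chaotic}, so in practice I would state the expansion, invoke \cite[Section 5]{Caroline_Anthony_chaotic} for the $L^2$-convergence and uniqueness, and devote the written proof to the explicit one-step recursion displayed above, which is the genuinely new computational content for the specific functional $X_t^\zeta$.
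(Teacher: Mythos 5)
Your proposal is correct and takes essentially the same route as the paper: existence and uniqueness of the pseudo-chaotic expansion with $c_n^{\zeta,t}=\mathcal D^n X_t^\zeta$ are delegated to \cite{Caroline_Anthony_chaotic}, and the substantive step is the explicit evaluation of $\mathcal D^n_{(x_1,\ldots,x_n)}X_t^\zeta$, in which the predictability of $\lambda$ forces every contribution except the one attached to the largest time $t_n$ to cancel, leaving $\zeta(t-t_n)\,\mathcal D^{n-1}_{(x_1,\ldots,x_{n-1})}\ind{\theta_n\leq\lambda_{t_n}}$. The only cosmetic difference is how that cancellation is organized (you pair $J$ with $J\cup\{x_n\}$, while the paper eliminates each $k\leq n-1$ via an alternating binomial sum over supersets), which is immaterial.
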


\begin{proof}
This follows from \cite{Caroline_Anthony_chaotic} which gives the pseudo-chaotic expansion of any random linear functional of $N$ restricted to a bounded domain (say $[0,T]\times[0,M]$, $T, M>0$) of $\real^2$; with a focus on random variables of the form $F=H_t$ where $H$ is a counting process with bounded intensity (we refer the reader to \cite{Caroline_Anthony_chaotic} for a complete exposition). Even though the intensity of a Hawkes process is unbounded, it is proved in \cite{Caroline_Anthony_chaotic} that marginals of Hawkes processes admit a pseudo-chaotic expansion. Mimicking this proof we get that 
$c_n^{\zeta,t}= \mathcal D^n X_t^\zeta$.  \\\\
\noindent
Let $(x_1,\cdots,x_n) \in ([0,t]\times \real_+)^n$ with $t_1 < \cdots <t_n$. Let $J\subset \{1,\cdots,n\}$. We have
\begin{eqnarray*}
X_t^\zeta(\sum_{k\in J} \delta_{x_k}) 
&= &\left(\int_{(0,t]\times \real_+} \zeta(t-s) \ind{\theta \leq \lambda_s} N(ds,d\theta)\right)\left(\sum_{k\in J} \delta_{x_k}\right) \\
&= &\sum_{k\in J} \zeta(t-t_k) \ind{\theta_k \leq \lambda_{t_k}(\sum_{j\in J} \delta_{x_j})} \\
&= &\sum_{k\in J} \zeta(t-t_k) \ind{\theta_k \leq \lambda_{t_k}(\sum_{j\in J \cap \{1,\cdots,k-1\}} \delta_{x_j})},
\end{eqnarray*}
as $\lambda$ is a predictable process. Hence using the notation $\{1,\cdots, k-1\}:=\emptyset$ for $k=1$,
\begin{align*}
&\mathcal D_{(x_1,\ldots,x_n)}^n X_t^\zeta\\
&= \sum_{J \subset \{1,\cdots,n\}} (-1)^{n-|J|} \left(\int_{(0,t]\times \real_+} \zeta(t-s) \ind{\theta \leq \lambda_s} N(ds,d\theta)\right)\left(\sum_{k\in J} \delta_{x_k}\right) \\
&= \sum_{J \subset \{1,\cdots,n\}} (-1)^{n-|J|} \sum_{k\in J} \zeta(t-t_k) \ind{\theta_k \leq \lambda_{t_k}(\sum_{j\in J \cap \{1,\cdots,k-1\}} \delta_{x_j})} \\
&= \sum_{k=1}^{n-1} \sum_{J \subset \{1,\cdots,n\}; k\in J} (-1)^{n-|J|} \zeta(t-t_k) \ind{\theta_k \leq \lambda_{t_k}(\delta_{x_k}+\sum_{j\in J \cap \{1,\cdots,k-1\}} \delta_{x_j})} \\
&+ \sum_{J \subset \{1,\cdots,n\}; n\in J} (-1)^{n-|J|} \zeta(t-t_k) \ind{\theta_n \leq \lambda_{t_n}(\delta_{x_n}+\sum_{j\in J \cap \{1,\cdots,n-1\}} \delta_{x_j})} -\ind{J=\emptyset} \times 0\\
&= \sum_{k=1}^{n-1} \zeta(t-t_k) \sum_{J \subset \{1,\cdots,k-1,k+1,\cdots,n\}} (-1)^{n-1-|J|} \ind{\theta_k \leq \lambda_{t_k}(\delta_{x_k}+\sum_{i\in J \cap \{1,\cdots,k-1\}} \delta_{x_i})} \\
&+\zeta(t-t_n)  \sum_{J \subset \{1,\cdots,n-1\}} (-1)^{n-1-|J|} \ind{\theta_n \leq \lambda_{t_n}(\delta_{x_n}+\sum_{i\in J \cap \{1,\cdots,n-1\}} \delta_{x_i})}.
\end{align*}
On the one hand, for $k\in \{1,\cdots,n-1\}$,
\begin{align*}
& \sum_{J \subset \{1,\cdots,k-1,k+1,\cdots,n\}} (-1)^{n-1-|J|} \ind{\theta_k \leq \lambda_{t_k}(\delta_{x_k}+\sum_{i\in J \cap \{1,\cdots,k-1\}} \delta_{x_i})}\\
&=\sum_{U=\tilde U\cup \{k\}; \tilde U\subset \{1,\cdots,k-1\}} \; \sum_{J \subset \{1,\cdots,k-1,k+1,\cdots,n\}; \tilde U \subset J} (-1)^{n-1-|J|} \ind{\theta_k \leq \lambda_{t_k}(\sum_{i\in U} \delta_{x_i})} \\\
&= \sum_{U=\tilde U\cup \{k\}; \tilde U\subset \{1,\cdots,k-1\}} \ind{\theta_k \leq \lambda_{t_k}(\sum_{i\in U} \delta_{x_i})} (-1)^{n-1-|\tilde U|}  \sum_{J \subset \{1,\cdots,k-1,k+1,\cdots,n\}; \tilde U \subset J} (-1)^{|\tilde U|-|J|} \\
&=0,
\end{align*}
as $[k \leq n-1]$  implies that $\#\{J \subset \{1,\cdots,k-1,k+1,\cdots,n\}; \tilde U \subset J \} >1$ and thus by Newton's Binomial formula \vspace*{-0.5cm}$$\sum_{J \subset \{1,\cdots,k-1,k+1,\cdots,n\}; \tilde U \subset J} (-1)^{|\tilde U|-|J|}=0.$$
On the other hand 
$$\zeta(t-t_n)  \sum_{J \subset \{1,\cdots,n-1\}} (-1)^{n-1-|J|} \ind{\theta_n \leq \lambda_{t_n}(\sum_{i\in J \cap \{1,\cdots,n-1\}} \delta_{x_i})} 
= \zeta(t-t_n) \mathcal D_{(x_1,\cdots,x_{n-1})}^{n-1} \ind{\theta_n \leq \lambda_{t_n}};$$
which concludes the proof.
\end{proof}

\begin{prop}
\label{prop:integratec}
Let $\zeta \equiv \Phi$ or $\zeta \equiv 1$ and recall Notation \ref{notationX}. Let $t\geq 0$, $n\in \mathbb N^*$ and $(x_1,\cdots,x_n) \in \mathbb X^n$ with $0< t_1 < \ldots < t_n \leq t$. We have that :
$$ \int_{\real_+^n} \ldots \int c_n^{\zeta,t}(x_1,\cdots,x_n) d \theta_1 \ldots d\theta_n = \mu \zeta(t-t_n) \prod_{i=2}^{n} \Phi(t_i-t_{i-1}).$$
\end{prop}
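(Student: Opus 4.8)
The plan is to prove the identity by induction on $n\geq 1$, carrying both cases $\zeta\equiv\Phi$ and $\zeta\equiv 1$ through the induction at once and for every horizon $t\geq 0$ --- this is essential, because the inductive step reduces the computation to an integral of $c_{n-1}^{\Phi,\cdot}$ whatever value of $\zeta$ one started from. For the base case $n=1$, one computes directly from Definition \ref{definition:patwisederivative}, Notation \ref{notationX} and Proposition \ref{prop:pseudochageneral} that $c_1^{\zeta,t}(x_1)=\mathcal D^1_{(x_1)}X^\zeta_t=X^\zeta_t(\delta_{x_1})-X^\zeta_t(\varpi_\emptyset)$; here $X^\zeta_t(\varpi_\emptyset)=0$, while evaluating the intensity equation \eqref{eq:EDOlambda} at the configuration $\delta_{x_1}$ gives $\lambda_{t_1}(\delta_{x_1})=\mu$ (the integral there runs over the open interval $(0,t_1)$, hence ignores the single atom at $t_1$), so that $c_1^{\zeta,t}(x_1)=\zeta(t-t_1)\ind{\theta_1\leq\mu}$. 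Integrating in $\theta_1$ yields $\mu\,\zeta(t-t_1)$, which is the claimed expression since $\prod_{i=2}^{1}\Phi(t_i-t_{i-1})=1$ by Convention \ref{convention:sums}.

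For the inductive step, fix $n\geq 2$ and assume the statement holds for $n-1$. Since $0<t_1<\cdots<t_n\leq t$, the points $x_1,\dots,x_n$ are already ordered, so Proposition \ref{prop:pseudochageneral} gives $c_n^{\zeta,t}(x_1,\dots,x_n)=\zeta(t-t_n)\,\mathcal D^{n-1}_{(x_1,\dots,x_{n-1})}\ind{\theta_n\leq\lambda_{t_n}}$. I would integrate the variables in the order $\theta_n,\theta_{n-1},\dots,\theta_1$. The integration in $\theta_n$ is merely an exchange of one integral with the finite signed sum defining $\mathcal D^{n-1}_{(x_1,\dots,x_{n-1})}$: for each $J\subset\{x_1,\dots,x_{n-1}\}$ the value $\lambda_{t_n}(\varpi_J)$ is independent of $\theta_n$ and nonnegative, so
$$\int_{\real_+}\mathcal D^{n-1}_{(x_1,\dots,x_{n-1})}\ind{\theta_n\leq\lambda_{t_n}}\,d\theta_n=\mathcal D^{n-1}_{(x_1,\dots,x_{n-1})}\lambda_{t_n}.$$
Writing $\lambda_{t_n}=\mu+X^{\Phi}_{t_n}$ (Notation \ref{notationX}), using $\mathcal D^{n-1}_{(x_1,\dots,x_{n-1})}\mu=0$ (because $\sum_{J\subset\{x_1,\dots,x_{n-1}\}}(-1)^{n-1-|J|}=0$ for $n-1\geq 1$), and then applying Proposition \ref{prop:pseudochageneral} once more to $X^{\Phi}_{t_n}$ with the ordered points $x_1,\dots,x_{n-1}\in([0,t_n]\times\real_+)^{n-1}$, one identifies
$$\mathcal D^{n-1}_{(x_1,\dots,x_{n-1})}\lambda_{t_n}=\mathcal D^{n-1}_{(x_1,\dots,x_{n-1})}X^{\Phi}_{t_n}=c_{n-1}^{\Phi,t_n}(x_1,\dots,x_{n-1}).$$
Integrating the remaining variables $\theta_{n-1},\dots,\theta_1$ and invoking the induction hypothesis with parameter $n-1$, horizon $t_n$, and $\zeta\equiv\Phi$ then gives
$$\int_{\real_+^n}c_n^{\zeta,t}\,d\theta_n\cdots d\theta_1=\zeta(t-t_n)\,\mu\,\Phi(t_n-t_{n-1})\prod_{i=2}^{n-1}\Phi(t_i-t_{i-1})=\mu\,\zeta(t-t_n)\prod_{i=2}^{n}\Phi(t_i-t_{i-1}),$$
which closes the induction.

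I do not expect a deep obstacle: the argument is a clean recursion in which every integration step is the exchange of a single integral with a finite alternating sum. The one point that must not be glossed over is that the multiple integral $\int_{\real_+^n}c_n^{\zeta,t}\,d\theta_1\cdots d\theta_n$ is not absolutely convergent --- already the ``$J=\emptyset$'' contribution $\ind{\theta_n\leq\mu}$, integrated against the other $\theta_i$'s on which it does not depend, diverges --- so it has to be read as the iterated integral taken with $\theta_n$ innermost and $\theta_1$ outermost; the algebraic structure of $\mathcal D^{n-1}$ is exactly what makes the telescoping happen in that order. The remaining care is bookkeeping: applying the induction hypothesis for $\zeta\equiv\Phi$ (rather than for the original $\zeta$) and for the shifted horizon $t_n$, and using the empty-product convention at $n=1$.
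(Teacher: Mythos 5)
Your proof is correct and follows essentially the same route as the paper's own argument: integrate out $\theta_n$ against the finite alternating sum to obtain $\zeta(t-t_n)\,\mathcal D^{n-1}_{(x_1,\ldots,x_{n-1})}\lambda_{t_n}$, identify this (since $\mathcal D^{n-1}\mu=0$ and $\lambda_{t_n}-\mu=X^{\Phi}_{t_n}$) with $\zeta(t-t_n)\,c_{n-1}^{\Phi,t_n}(x_1,\ldots,x_{n-1})$, and recurse — your bookkeeping of the induction (carrying the shifted horizon $t_n$ and $\zeta\equiv\Phi$ through the hypothesis) is in fact a slightly cleaner statement of the recursion the paper invokes with ``the result follows by induction.'' The only inaccuracy is your side remark on convergence: by Lemma \ref{lemma:necessarycn} the coefficient $c_n^{\zeta,t}$ is bounded and supported in a bounded region of the $\theta$-variables, so its multiple integral over $\real_+^n$ is absolutely convergent; it is only the individual terms $F(\varpi_J)$ of the alternating sum that are not separately integrable, a difficulty your one-variable-at-a-time iterated integration handles correctly in any case.
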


\begin{proof}
Note first that as $\mathcal D^k \mu =0$, we proved in the proof of Proposition \ref{prop:pseudochageneral} that :
$$ \mathcal D_{(x_1,\ldots,x_n)}^n \lambda_s = \Phi(s-t_n) \mathcal D_{(x_1,\cdots,x_{n-1})}^{n-1} \ind{\theta_n \leq \lambda_{t_n}}; \quad \forall s \geq t_n.$$

We have that : 
\begin{eqnarray*}
\int_{\real_+} c_n^{\zeta,t}(x_1,\cdots,x_n) d\theta_n 
&= &\zeta(t-t_n) \int_{\real_+} \mathcal D^{n-1}_{x_1,\cdots,x_{n-1}} \ind{\theta_n \leq \lambda_n} d\theta_n \\
&=& \zeta(t-t_n) \int_{\real_+}  \sum_{J \subset \{1,\cdots,n-1\}} (-1)^{n-1-|J|} \ind{\theta_n \leq \lambda_{t_n}\left(\sum_{j\in J} \delta_{x_j}\right)} d\theta_n \\
&= &\zeta(t-t_n) \sum_{J \subset \{1,\cdots,n-1\}} (-1)^{n-1-|J|}  \int_{\real_+} \ind{\theta_n \leq \lambda_{t_n}\left(\sum_{j\in J} \delta_{x_j}\right)} d\theta_n \\
&= &\zeta(t-t_n) \sum_{J \subset \{1,\cdots,n-1\}} (-1)^{n-1-|J|}  \lambda_{t_n}\left(\sum_{j\in J} \delta_{x_j}\right) \\
&= &\zeta(t-t_n) \mathcal D^{n-1}_{x_1,\cdots,x_{n-1}} \lambda_{t_n}\\
&= &\begin{cases} \zeta(t-t_1) \mu, \quad \textrm{ if } n=1\\ \zeta(t-t_n) \Phi(t_n-t_{n-1}) D^{n-2}_{x_1,\cdots,x_{n-2}} \ind{\theta_{n-1} \leq \lambda_{t_{n-1}}}, \quad \textrm{ if } n\geq 2\\\end{cases}\\
&= &\begin{cases} \zeta(t-t_1) \mu, \quad \textrm{ if } n=1\\ \zeta(t-t_n) \Phi(t_n-t_{n-1}) c_{n-1}^{\zeta,t}(x_1,\cdots,x_{n-1}), \quad \textrm{ if } n\geq 2.\end{cases}\\
\end{eqnarray*}
The result follows by induction.
\end{proof}

\begin{remark}
By Relation (\ref{eq:expectIj}) we immediately get that 
$$ \E[X_t^\zeta] = \sum_{n\geq 1} \frac{1}{n!} \int_{\X^n} c_n^{\zeta,t}(x_1,\cdots,x_n) dx_1 \ldots x_n$$
which will allow us to recover the well-known expressions of $\E[H_t]$ and of $\E[\lambda_t]$ (see the proof of Theorem \ref{th:main2}).
\end{remark}

\section{Proof of Theorem \ref{th:main}}
\label{section:proofmain}

The proof relies on several results. 

\subsection{Preliminary results}\label{sssection:proofmain2}

The lemma below is a key observation on the support of the coefficients $c_n^{\zeta,t}$.

\begin{lemma}
\label{lemma:necessarycn}
Let $\zeta \equiv \Phi$ or $\zeta \equiv 1$ and recall Notation \ref{notationX}. Fix $t\geq 0$, let $n\in \mathbb N^*$, $(x_1,\ldots,x_n) \in \Delta_{(n)}\cap ((0,t]\times \real_+)^n$. It holds that : 
$$ c_n^{\zeta,t}(x_1,\ldots,x_n) = c_n^{\zeta,t}(x_1,\ldots,x_n)  \ind{\theta_1 \leq \mu} \prod_{i=2}^{n} \ind{\theta_i \leq \mu+\sum_{j=1}^{i-1} \Phi(t_i-t_j)}.$$
\end{lemma}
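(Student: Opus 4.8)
The plan is to unwind the recursive structure of $c_n^{\zeta,t}$ obtained in Proposition \ref{prop:integratec} and its proof, and to track which configurations actually contribute. The key point established in the proof of Proposition \ref{prop:integratec} is the backward recursion
$$ \mathcal D^{n-1}_{(x_1,\cdots,x_{n-1})} \ind{\theta_n \leq \lambda_{t_n}} = \Phi(t_n-t_{n-1}) \, \mathcal D^{n-2}_{(x_1,\cdots,x_{n-2})} \ind{\theta_{n-1}\leq \lambda_{t_{n-1}}}, $$
valid modulo the indicator in $\theta_n$, which peels off one variable at a time and eventually reaches $n=1$ where $\mathcal D^0 \ind{\theta_1 \leq \lambda_{t_1}} = \ind{\theta_1\leq \mu}$ since $\lambda_{t_1}(\varnothing)=\mu$. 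So the first step is to argue that $c_n^{\zeta,t}(x_1,\ldots,x_n)$ is \emph{already} a product of the factor $\zeta(t-t_n)\prod_{i=2}^n \Phi(t_i-t_{i-1})$ times an indicator structure; more precisely, I would show inductively that $c_n^{\zeta,t}$ is supported on the event that each $\theta_i$ lies below the value $\lambda_{t_i}$ evaluated at the sub-configuration of points preceding it, so the indicators $\ind{\theta_1\leq \mu}$ and $\ind{\theta_i \leq \mu + \sum_{j<i}\Phi(t_i-t_j)}$ can be inserted for free.

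Concretely, the second step is the observation that for an ordered tuple $(x_1,\ldots,x_n)\in\Delta_{(n)}$ with $t_1<\cdots<t_n$, the value of the intensity at $t_i$ evaluated at the configuration $\sum_{j\in J}\delta_{x_j}$ with $J\subset\{1,\ldots,i-1\}$ is, by \eqref{eq:EDOlambda},
$$ \lambda_{t_i}\Big(\sum_{j\in J}\delta_{x_j}\Big) = \mu + \sum_{j\in J}\Phi(t_i-t_j) \ind{\theta_j \leq \lambda_{t_j}(\sum_{l\in J, l<j}\delta_{x_l})} \leq \mu + \sum_{j=1}^{i-1}\Phi(t_i-t_j), $$
because $\Phi\geq 0$ and each indicator is at most $1$. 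Hence every term $F(\varpi_J)$ appearing in $\mathcal D^n_{(x_1,\ldots,x_n)} X_t^\zeta$ whose $\theta_i$-dependence survives (i.e. those $J$ containing $i$, contributing $\ind{\theta_i \leq \lambda_{t_i}(\cdots)}$) forces $\theta_i \leq \mu + \sum_{j<i}\Phi(t_i-t_j)$; and for the lowest index, survival forces $\theta_1\leq\mu$. Combining this for $i=1,\ldots,n$ via the recursion in the proof of Proposition \ref{prop:integratec}, which shows $c_n^{\zeta,t}$ is (up to the deterministic kernel factor) exactly $\prod_{i=1}^{n}\ind{\theta_i\leq\lambda_{t_i}(\cdots)}$-type, yields that multiplying $c_n^{\zeta,t}$ by $\ind{\theta_1\leq\mu}\prod_{i=2}^n\ind{\theta_i\leq\mu+\sum_{j<i}\Phi(t_i-t_j)}$ leaves it unchanged.

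The cleanest route, which I would actually write, is a short induction on $n$ using the recursion from Proposition \ref{prop:integratec}'s proof in the form $c_n^{\zeta,t}(x_1,\ldots,x_n) = \zeta(t-t_n)\,\Phi(t_n-t_{n-1})\cdot\big(\text{same structure at level }n-1\text{ with }\zeta\equiv 1\big)$ after integrating out $\theta_n$ — but here I must keep $\theta_n$ and instead use that $c_n^{\zeta,t}(x_1,\ldots,x_n) = \zeta(t-t_n)\,\mathcal D^{n-1}_{(x_{(1)},\ldots,x_{(n-1)})}\ind{\theta_{(n)}\leq\lambda_{t_n}}$ directly from Proposition \ref{prop:pseudochageneral}: the indicator $\ind{\theta_n\leq\lambda_{t_n}(\cdots)}$ carries the bound on $\theta_n$, while $\mathcal D^{n-1}_{(x_1,\ldots,x_{n-1})}\ind{\theta_n\leq\lambda_{t_n}}$ is, again by the computation in Proposition \ref{prop:pseudochageneral}, proportional to $\mathcal D^{n-2}_{(x_1,\ldots,x_{n-2})}\ind{\theta_{n-1}\leq\lambda_{t_{n-1}}}$, closing the induction on the remaining variables $\theta_1,\ldots,\theta_{n-1}$.

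The main obstacle I anticipate is bookkeeping rather than conceptual: one must be careful that the sub-configurations at which $\lambda_{t_i}$ is evaluated inside the various $\mathcal D$-terms are always sub-configurations of $\{x_1,\ldots,x_{i-1}\}$ (guaranteed by predictability of $\lambda$ and the ordering $t_1<\cdots<t_n$, as already used in Proposition \ref{prop:pseudochageneral}), so that the uniform bound $\lambda_{t_i}(\cdots)\leq\mu+\sum_{j<i}\Phi(t_i-t_j)$ genuinely applies to \emph{every} surviving term simultaneously; and that the alternating-sign cancellations identified in Proposition \ref{prop:pseudochageneral} do not destroy this support property (they don't, since each surviving term individually respects the bound, and a sum of functions each supported on a set $S$ is supported on $S$). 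No estimates are needed — only nonnegativity of $\Phi$ and the trivial bound $\ind{\cdot}\leq 1$.
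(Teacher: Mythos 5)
Your uniform bound $\lambda_{t_i}\bigl(\sum_{j\in J}\delta_{x_j}\bigr)\leq \mu+\sum_{j=1}^{i-1}\Phi(t_i-t_j)$ is correct, and it does settle the \emph{last} coordinate: every term $\ind{\theta_n\leq \lambda_{t_n}(\varpi_J)}$ in $c_n^{\zeta,t}(x_1,\ldots,x_n)=\zeta(t-t_n)\sum_{J\subset\{x_1,\ldots,x_{n-1}\}}(-1)^{n-1-|J|}\ind{\theta_n\leq \lambda_{t_n}(\varpi_J)}$ vanishes on $\{\theta_n>\mu+\sum_{j=1}^{n-1}\Phi(t_n-t_j)\}$, which is exactly the paper's first step. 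The gap is in your treatment of the coordinates $\theta_k$ with $k<n$. The assertion that ``each surviving term individually respects the bound'', so that the signed sum is a sum of functions supported on the same set, is false: the term with $J=\emptyset$, namely $\pm\,\ind{\theta_n\leq\mu}$, does not depend on $\theta_k$ at all and does not vanish when $\theta_k>\mu+\sum_{j<k}\Phi(t_k-t_j)$; the same holds for every $J\not\ni k$, and even for $J\ni k$ only the $k$-th summand of $X_t^\zeta(\varpi_J)$ dies. Already for $n=2$ one has $c_2^{\zeta,t}=\zeta(t-t_2)\bigl[\ind{\theta_2\leq \mu+\Phi(t_2-t_1)\ind{\theta_1\leq\mu}}-\ind{\theta_2\leq\mu}\bigr]$: neither term is supported in $\{\theta_1\leq\mu\}$, only their difference is. Likewise, the pointwise proportionality $\mathcal D^{n-1}_{(x_1,\ldots,x_{n-1})}\ind{\theta_n\leq\lambda_{t_n}}\propto \mathcal D^{n-2}_{(x_1,\ldots,x_{n-2})}\ind{\theta_{n-1}\leq\lambda_{t_{n-1}}}$ on which your induction leans is not true as stated; in Proposition \ref{prop:integratec} that recursion is obtained only \emph{after} integrating out $\theta_n$, which you cannot do here since the lemma is a pointwise identity in all the $\theta_i$.

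What is missing is a cancellation argument across terms rather than a support property of individual terms, and this is how the paper proceeds: fix $k\leq n-1$ and work on $\{\theta_k>\mu+\sum_{j=1}^{k-1}\Phi(t_k-t_j)\}$. On that event $\theta_k$ exceeds $\lambda_{t_k}(\varpi)$ for every sub-configuration $\varpi$ of $\{x_1,\ldots,x_{k-1}\}$, so the atom $x_k$ never fires and therefore $\lambda_{t_n}(\delta_{x_k}+\varpi_J)=\lambda_{t_n}(\varpi_J)$ for every $J\subset\{x_1,\ldots,x_{n-1}\}\setminus\{x_k\}$. Pairing $J$ with $J\cup\{x_k\}$, the two indicators coincide while their signs $(-1)^{n-1-|J|}$ and $(-1)^{n-2-|J|}$ are opposite, so the whole alternating sum vanishes on that event. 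Applying this for each $k\leq n-1$, together with your (correct) direct argument for $\theta_n$, yields the lemma. As written, your proposal does not contain this pairing step, so it does not establish the statement for the variables $\theta_1,\ldots,\theta_{n-1}$.
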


\begin{proof}
By definition
$$ c_n^{\zeta,t}(x_1,\ldots,x_n) = \zeta(t-t_n) \sum_{J\subset \{x_1,\cdots,x_{n-1}\}} (-1)^{n-1-|J|} \ind{\theta_n \leq \lambda_{t_n}(\sum_{j \in J} \delta_j)}.$$
Hence
$$ c_n^{\zeta,t}(x_1,\ldots,x_n) \ind{\theta_n > \mu+\sum_{j=1}^{n-1} \Phi(t_n-t_j)}=0. $$
Let $k \in \{1,\cdots,n-1\}$.
\begin{align*}
&c_n^{\zeta,t}(x_1,\ldots,x_n) \zeta(t-t_n)\ind{\theta_{k} > \mu+\sum_{j=1}^{k} \Phi(t_k-t_j)} \\
&= \zeta(t-t_n) \ind{\theta_{k} > \mu+\sum_{j=1}^{k} \Phi(t_k-t_j)} \sum_{J\subset \{x_1,\cdots,x_{n-1}\}} (-1)^{n-1-|J|} \ind{\theta_n \leq \lambda_{t_n}(\sum_{j\in J} \delta_j)}\\
&= \zeta(t-t_n) \ind{\theta_{k} > \mu+\sum_{j=1}^{k} \Phi(t_k-t_j)}\sum_{J\subset \{x_1,\cdots,x_{n-1}\}; x_{k} \in J} (-1)^{n-1-|J|} \ind{\theta_n \leq \lambda_{t_n}(\sum_{j \in J} \delta_j)}\\
&+ \zeta(t-t_n) \ind{\theta_{k} > \mu+\sum_{j=1}^{k} \Phi(t_k-t_j)} \sum_{J\subset \{x_1,\cdots,x_{n-1}\}; x_k \notin J} (-1)^{n-1-|J|} \ind{\theta_n \leq \lambda_{t_n}(\sum_{j \in J} \delta_j)}\\
&= \zeta(t-t_n) \ind{\theta_{k} > \mu+\sum_{j=1}^{k} \Phi(t_k-t_j)} \sum_{J\subset \{x_2,\cdots,x_{k-1},x_{k+1},\cdots,x_{n-1}\}} (-1)^{n-2-|J|} \ind{\theta_n \leq \lambda_{t_n}(\delta_{x_{k}}+\sum_{j \in J} \delta_j)}\\
&+ \zeta(t-t_n) \ind{\theta_{k} > \mu+\sum_{j=1}^{k} \Phi(t_k-t_j)} \sum_{J\subset \{x_2,\cdots,x_{n-1}\}} (-1)^{n-1-|J|} \ind{\theta_n \leq \lambda_{t_n}(\sum_{j \in J} \delta_j)}\\
&=0,
\end{align*}
as $ \lambda_{t_n}(\delta_{x_k} + \sum_{j \in J} \delta_j) \ind{\theta_{k} > \mu+\sum_{j=1}^{k} \Phi(t_k-t_j)} = \lambda_{t_n}(\sum_{j \in J} \delta_j) \ind{\theta_{k} > \mu+\sum_{j=1}^{k} \Phi(t_k-t_j)}$.
\end{proof}
\noindent Based on the previous observation, the expectation of the perturbed intensity only depends on the time-components of the marks $x_i$ provided the $\theta_i$ parameters  belong to the support described above. This constitutes a sort of decoupling of the $t_i$ and $\theta_i$ components.

\begin{prop}
\label{prop:expectlambdashifted}
Fix $t\geq 0$, $(x_1,\ldots,x_n) \in \Delta_{(n)}$ such that  \vspace*{-0.3cm}
$$ \ind{\theta_1 \leq \mu} \prod_{i=2}^{n} \ind{\theta_i \leq \mu+\sum_{j=1}^{i-1} \Phi(t_i-t_j)} = 1.$$
\vspace*{-0.3cm}
\begin{align*}
{\mbox Then } \quad \E\left[\lambda_t\circ \varepsilon_{(x_1,\ldots,x_{n})}^{+,n}\right] = \mu \left(1+\int_0^t \Psi(t-s) ds\right) +\sum_{j=1}^n \Psi(t-t_j) \textbf{1}_{[t_j,+\infty)}(t), \quad t \geq 0.
\end{align*}
\end{prop}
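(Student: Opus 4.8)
The plan is to compute $\E[\lambda_t \circ \varepsilon_{(x_1,\ldots,x_n)}^{+,n}]$ by exploiting the Volterra structure of the intensity equation \eqref{eq:EDOlambda} after adding the $n$ atoms $x_i=(t_i,\theta_i)$. First I would apply the add-points operator $\varepsilon_{(x_1,\ldots,x_n)}^{+,n}$ directly to \eqref{eq:EDOlambda}. Because $N\circ \varepsilon_{(x_1,\ldots,x_n)}^{+,n} = N + \sum_{i=1}^n \delta_{x_i}$ (with $\P$-probability one the $x_i$ are not already atoms of $\omega$), the perturbed intensity $\lambda_t^{(n)} := \lambda_t \circ \varepsilon_{(x_1,\ldots,x_n)}^{+,n}$ satisfies
$$ \lambda_t^{(n)} = \mu + \int_{(0,t)\times\real_+} \Phi(t-s)\ind{\theta\leq \lambda_s^{(n)}} N(ds,d\theta) + \sum_{i=1}^n \Phi(t-t_i)\ind{\theta_i\leq \lambda_{t_i}^{(n)}}\,\ind{t_i<t}. $$
The hypothesis $\ind{\theta_1\leq\mu}\prod_{i=2}^n \ind{\theta_i\leq \mu+\sum_{j=1}^{i-1}\Phi(t_i-t_j)}=1$ is exactly what is needed (via Lemma \ref{lemma:necessarycn} and a causality/induction argument on $i$, using that $\lambda_{t_i}^{(n)}$ only depends on atoms with time-coordinate $<t_i$) to guarantee that each indicator $\ind{\theta_i\leq\lambda_{t_i}^{(n)}}$ equals $1$: indeed $\lambda_{t_i}^{(n)}\geq \mu+\sum_{j<i}\Phi(t_i-t_j)\geq\theta_i$ since the added atoms at times $t_1,\ldots,t_{i-1}$ each contribute a nonnegative term $\Phi(t_i-t_j)\geq 0$. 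Hence the perturbed equation becomes
$$ \lambda_t^{(n)} = \mu + \sum_{i=1}^n \Phi(t-t_i)\ind{t_i\leq t} + \int_{(0,t)\times\real_+} \Phi(t-s)\ind{\theta\leq\lambda_s^{(n)}} N(ds,d\theta). $$

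The second step is to take expectations. Writing $m_t := \E[\lambda_t^{(n)}]$ and using the martingale property of $N(ds,d\theta) - ds\,d\theta$ (equivalently, the $\F$-intensity characterization of $H$, or Mecke's formula as in Proposition \ref{prop:Mecke}), the stochastic integral has expectation $\int_0^t \Phi(t-s)\,m_s\,ds$, so that
$$ m_t = \mu + \sum_{i=1}^n \Phi(t-t_i)\ind{t_i\leq t} + \int_0^t \Phi(t-s)\, m_s\, ds. $$
This is a linear Volterra equation of exactly the type treated in Lemma \ref{lemma:Bacryetal} with driving term $g(t) = \mu + \sum_{i=1}^n \Phi(t-t_i)\ind{t_i\leq t}$, which is locally bounded since $\Phi\in L^1$ is (for the iterated-convolution arguments) effectively locally bounded — or one may simply invoke the resolvent form directly, as the linear solution formula $m_t = g(t) + \int_0^t \Psi(t-s)g(s)\,ds$ is valid for $g\in L^1_{loc}$. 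Applying it gives
$$ m_t = \mu + \sum_{i=1}^n \Phi(t-t_i)\ind{t_i\leq t} + \int_0^t \Psi(t-s)\Big(\mu + \sum_{i=1}^n \Phi(s-t_i)\ind{t_i\leq s}\Big)\, ds. $$

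The final step is bookkeeping: split the integral into the $\mu$-part, which gives $\mu\int_0^t\Psi(t-s)\,ds$, and the $\Phi$-part. For each $i$ with $t_i\leq t$, one has $\Phi(t-t_i) + \int_{t_i}^t \Psi(t-s)\Phi(s-t_i)\,ds = \Phi(t-t_i) + (\Psi\ast \Phi(\cdot-t_i))(t-t_i)$, and since $\Psi\ast\Phi = \Psi\ast\Phi_1 = \sum_{n\geq 1}\Phi_{n+1} = \Psi-\Phi$ (by the definition \eqref{eq:Phin}--\eqref{eq:Psi} of the iterated convolutions), this telescopes to $\Psi(t-t_i)$. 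Reassembling, $m_t = \mu\big(1+\int_0^t\Psi(t-s)\,ds\big) + \sum_{i=1}^n \Psi(t-t_i)\ind{t_i\leq t}$, which is the claimed formula (the condition $(x_1,\ldots,x_n)\in\Delta_{(n)}$ ensures $t_1<\cdots<t_n$, so $\ind{t_i\leq t}=\textbf{1}_{[t_i,+\infty)}(t)$ as written). I expect the main obstacle to be the first step — justifying rigorously that the added atoms keep all indicators $\ind{\theta_i\leq\lambda_{t_i}^{(n)}}$ equal to $1$ under the stated hypothesis on the $\theta_i$, which requires care with the pathwise causal structure of the SDE \eqref{eq:EDOlambda} and a clean induction; the Volterra inversion and the convolution telescoping are then routine given the results already established.
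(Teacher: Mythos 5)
Your proposal is correct and follows essentially the same route as the paper's proof: use the hypothesis on the $\theta_i$ to force the indicators $\ind{\theta_i\leq \lambda_{t_i}\circ\varepsilon^{+,n}_{(x_1,\ldots,x_n)}}$ to equal $1$, deduce that $m_t=\E[\lambda_t\circ\varepsilon^{+,n}_{(x_1,\ldots,x_n)}]$ solves the linear Volterra equation with driving term $\mu+\sum_{i=1}^n\Phi(t-t_i)\ind{t\geq t_i}$, and invert it via Lemma \ref{lemma:Bacryetal}. The only difference is cosmetic: you make explicit the identity $\Phi+\Psi\ast\Phi=\Psi$ used to simplify the resolvent expression, which the paper leaves implicit.
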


\begin{proof}
Let $t\geq 0$ and $(x_1,\ldots,x_n) \in \Delta_{(n)}$ such that $\ind{\theta_1 \leq \mu} \prod_{i=2}^{n} \ind{\theta_i \leq \mu+\sum_{j=1}^{i-1} \Phi(t_i-t_j)} = 1$. We have that
\begin{align*}
\E\left[\lambda_t\circ \varepsilon_{(x_1,\ldots,x_{n})}^{+,n}\right] 
= &\left[\mu+\E\left[\int_{0}^{t\wedge t_{1}} \Phi(t-s) \lambda_s\circ \varepsilon_{(x_1,\ldots,x_{n})}^{+,n} ds\right]\right]\nonumber\\
&+ \sum_{i=1}^{n-1} \ind{t\geq t_i} \ind{\theta_i \leq \lambda_{t_i}} \left[ \Phi(t-t_i) + \E\left[\int_{t_i}^{t\wedge t_{i+1}} \Phi(t-s) \lambda_s\circ \varepsilon_{(x_1,\ldots,x_{n})}^{+,n} ds \right]\right]\nonumber \\
&+ \ind{t\geq t_{n-1}} \ind{\theta_i \leq \lambda_{t_{n-1}}} \left[\Phi(t-t_{n-1}) + \E\left[\int_{t_{n-1}}^{t}\Phi(t-s) \lambda_s\circ \varepsilon_{(x_1,\ldots,x_{n})}^{+,n} ds\right] \right].
\end{align*}
Fix $i$. By assumption $\prod_{j=1}^i \ind{\theta_j \leq \mu+\sum_{k=1}^{j-1} \Phi(t_k-t_j)}=1$, hence by definition of $\lambda_{t_i}$ we have that $\lambda_{t_i} \geq \mu+\sum_{k=1}^{j-1} \Phi(t_k-t_j)$ leading to $ \ind{\theta_i \leq \lambda_{t_i}} \prod_{j=1}^i \ind{\theta_j \leq \mu+\sum_{k=1}^{j-1} \Phi(t_k-t_j)} = 1.$ Coming back to our computation,

\begin{align*}
\E\left[\lambda_t\circ \varepsilon_{(x_1,\ldots,x_{n})}^{+,n}\right] 
=& \left[\mu+\int_{0}^{t\wedge t_{1}} \Phi(t-s) \E\left[\lambda_s\circ \varepsilon_{(x_1,\ldots,x_{n})}^{+,n} \right] ds\right]\nonumber\\
&+ \sum_{i=1}^{n-1} \ind{t\geq t_i} \left[ \Phi(t-t_i) + \int_{t_i}^{t\wedge t_{i+1}} \Phi(t-s) \E\left[\lambda_s\circ \varepsilon_{(x_1,\ldots,x_{n})}^{+,n} ds \right]\right]\nonumber \\
&+ \ind{t\geq t_{n}} \left[\Phi(t-t_{n-1})+ \int_{t_{n-1}}^{t}\Phi(t-s) \E\left[\lambda_s\circ \varepsilon_{(x_1,\ldots,x_{n})}^{+,n} ds\right] \right]\\
&= \mu^{t_1,\cdots,t_{n}}(t)+\int_{0}^{t} \Phi(t-s) \E\left[\lambda_s\circ \varepsilon_{(x_1,\ldots,x_{n})}^{+,n}\right] ds,
\end{align*}
where 
$$\mu^{t_1,\cdots,t_{n}}(t) := \mu + \sum_{i=1}^{n} \ind{t\geq t_i} \Phi(t-t_i).$$ 
We recognize the ODE in  Lemma \ref{lemma:Bacryetal} whose unique solution is,  for  $t \geq 0$
$$\mu^{t_1,\cdots,t_{n}}(t) + \int_0^t \Psi(t-s) \mu^{t_1,\cdots,t_{n}}(s)ds= \mu \left(1+\int_0^t \Psi(t-s) ds\right) +\sum_{j=1}^n \Psi(t-t_j) \ind{[t_j,+\infty)}(t) .
$$
\end{proof}
\noindent The previous formula propagates to the process  $X^\zeta$ as follows.

\begin{prop}
\label{prop:expectationXtshifted}
Let $\zeta \equiv \Phi$ or $\zeta \equiv 1$ and recall Notation \ref{notationX}. $$\hspace*{-3.5cm}  \mbox{ Let } (x_1,\ldots,x_n) \in \Delta_{(n)}  \mbox{ such that  }
 \ind{\theta_1 \leq \mu} \prod_{i=2}^{n} \ind{\theta_i \leq \mu+\sum_{j=1}^{i-1} \Phi(t_i-t_j)} = 1.$$
Let $t\geq 0$ such that $t_1<\cdots<t_n\leq t$.
It holds that 
$$ \E\left[X_t^{\zeta}\circ \varepsilon_{(x_1,\ldots,x_{n})}^{+,n}\right] =\int_0^{t} \zeta(t-u) \varphi^{t_1,\cdots,t_n}(u)du + \sum_{i=1}^{n} \zeta(t-t_i)$$
\begin{equation} \label{eq:mushifted}
\mbox{ with  } \quad 
\varphi^{t_1,\cdots,t_n}(u) := \mu \left(1+\int_0^u \Psi(u-v) dv\right) +\sum_{j=1}^n \Psi(u-t_j) \ind{[t_j,+\infty)}(u), \quad u \geq 0.
\end{equation}
\end{prop}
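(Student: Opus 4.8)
The plan is to re-run the argument of Proposition~\ref{prop:expectlambdashifted} keeping the weight $\zeta$ arbitrary, and to plug in the formula for $\E[\lambda_t\circ\varepsilon_{(x_1,\ldots,x_n)}^{+,n}]$ already established there. Write $\bar\lambda_s:=\lambda_s\circ\varepsilon_{(x_1,\ldots,x_n)}^{+,n}$, that is $\bar\lambda_s(\omega)=\lambda_s(\omega+\sum_{j=1}^n\delta_{x_j})$. First I would apply the operator $\varepsilon_{(x_1,\ldots,x_n)}^{+,n}$ directly to the Poisson imbedding representation of $X_t^\zeta$ from Notation~\ref{notationX}. Since $N\circ\varepsilon_{(x_1,\ldots,x_n)}^{+,n}=N+\sum_{j=1}^n\delta_{x_j}$ and $\lambda$ is $\mathbb F$-predictable (so $\lambda_s\circ\varepsilon^{+,n}$ depends only on the marks with time-component strictly below $s$, and $t_1<\cdots<t_n$), one obtains the pathwise identity
\[ X_t^\zeta\circ\varepsilon_{(x_1,\ldots,x_n)}^{+,n}=\int_{(0,t)\times\real_+}\zeta(t-s)\ind{\theta\le\bar\lambda_s}N(ds,d\theta)+\sum_{i=1}^n\zeta(t-t_i)\ind{\theta_i\le\bar\lambda_{t_i}}, \]
valid for $t\ge t_n$; the endpoint case $t_n=t$ is negligible and the claim is understood a.e. in $t$.

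Next I would show that, under the support hypothesis, all the added-mark indicators equal $1$, i.e. $\ind{\theta_i\le\bar\lambda_{t_i}}=1$ for every $i\in\{1,\ldots,n\}$. This is precisely the inductive step already carried out in the proof of Proposition~\ref{prop:expectlambdashifted}: from
\[ \bar\lambda_{t_i}=\mu+\int_{(0,t_i)\times\real_+}\Phi(t_i-s)\ind{\theta\le\bar\lambda_s}\Big(N+\textstyle\sum_{j<i}\delta_{x_j}\Big)(ds,d\theta)\ \ge\ \mu+\sum_{j=1}^{i-1}\Phi(t_i-t_j)\ind{\theta_j\le\bar\lambda_{t_j}}, \]
together with the assumption $\ind{\theta_i\le\mu+\sum_{j=1}^{i-1}\Phi(t_i-t_j)}=1$, an induction on $i$ yields $\ind{\theta_i\le\bar\lambda_{t_i}}=1$. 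Hence the pathwise identity simplifies to $X_t^\zeta\circ\varepsilon^{+,n}=\int_{(0,t)\times\real_+}\zeta(t-s)\ind{\theta\le\bar\lambda_s}N(ds,d\theta)+\sum_{i=1}^n\zeta(t-t_i)$.

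Finally I would take expectations. The integrand $((s,\theta),\omega)\mapsto\zeta(t-s)\ind{s<t}\ind{\theta\le\bar\lambda_s(\omega)}$ is predictable (again because $\bar\lambda$ is adapted/predictable), so the compensation formula for the unit-intensity Poisson measure $N$ — a consequence of Mecke's formula, Proposition~\ref{prop:Mecke} — together with Fubini gives
\[ \E\Big[\int_{(0,t)\times\real_+}\zeta(t-s)\ind{\theta\le\bar\lambda_s}N(ds,d\theta)\Big]=\E\Big[\int_0^t\zeta(t-s)\bar\lambda_s\,ds\Big]=\int_0^t\zeta(t-s)\,\E[\bar\lambda_s]\,ds. \]
By Proposition~\ref{prop:expectlambdashifted}, $\E[\bar\lambda_s]=\varphi^{t_1,\ldots,t_n}(s)$ for every $s\ge0$, with $\varphi^{t_1,\ldots,t_n}$ as in \eqref{eq:mushifted}; this function is locally bounded, so the integral is well defined, and combining the two displays yields
\[ \E\big[X_t^\zeta\circ\varepsilon_{(x_1,\ldots,x_n)}^{+,n}\big]=\int_0^t\zeta(t-u)\,\varphi^{t_1,\ldots,t_n}(u)\,du+\sum_{i=1}^n\zeta(t-t_i), \]
which is the asserted formula for both $\zeta\equiv\Phi$ and $\zeta\equiv1$. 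The only genuinely delicate point is the measurability bookkeeping: one must check that $\omega\mapsto\lambda_s(\omega+\sum_j\delta_{x_j})$ is still a predictable process, so that the compensation formula applies verbatim; everything else is a direct re-run of the computation for $\lambda$ already done in Proposition~\ref{prop:expectlambdashifted}.
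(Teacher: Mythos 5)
Your proposal is correct and follows essentially the same route as the paper: a pathwise identity for $X_t^\zeta\circ\varepsilon_{(x_1,\ldots,x_n)}^{+,n}$ obtained by adding the atoms to the Poisson imbedding representation, the inductive observation that the support hypothesis forces all indicators $\ind{\theta_i\leq \lambda_{t_i}\circ\varepsilon^{+,n}}$ to equal one, then the compensation/Mecke step and Proposition \ref{prop:expectlambdashifted}. The only difference is cosmetic: the paper peels off the atoms one interval $(t_i,t_{i+1})$ at a time by induction, whereas you write the decomposition in a single step, which is a legitimate compression of the same argument.
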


\begin{proof}
We have 
\begin{align*}
&X_t^{\zeta} \circ \varepsilon_{(x_1,\ldots,x_{n})}^{+,n} \\
&= X_{t\wedge (t_1-)}^{\zeta} + \ind{\theta_1 \leq \mu} \ind{\theta_1 \leq \lambda_{t_1}}  \ind{t\geq t_1} \left[\zeta(t-t_1) + \int_{(t_1,t]\times \real_+} \zeta(t-s) \ind{\theta\leq \lambda_s\circ \varepsilon_{(x_1,\ldots,x_{n})}^{+,n}} N(ds,d\theta)\right]\\
&= X_{t\wedge (t_1-)}^{\zeta} + \ind{t\geq t_1} \left[ \zeta(t-t_1) + \int_{(t_1,t\wedge t_2)\times \real_+} \zeta(t-s) \ind{\theta\leq \lambda_s\circ \varepsilon_{(x_1,\ldots,x_{n})}^{+,n}} N(ds,d\theta)\right] \\
&+ \ind{t\geq t_2} \ind{\theta_1 \leq \mu} \ind{\theta_2 \leq \mu + \Phi(t_2-t_1)} \ind{\theta_2 \leq \lambda_{t_2}\circ \varepsilon_{x_1}^{+}} \left[\zeta(t-t_2) +\int_{(t_2,t]\times \real_+} \zeta(t-s) \ind{\theta\leq \lambda_s\circ \varepsilon_{(x_1,\ldots,x_{n})}^{+,n}} N(ds,d\theta) \right]\\
&= X_{t\wedge (t_1-)}^\zeta 
+ \ind{t\geq t_1} \left[ \zeta(t-t_1) + \int_{(t_1,t\wedge t_2)\times \real_+} \zeta(t-s) \ind{\theta\leq \lambda_s\circ \varepsilon_{(x_1,\ldots,x_{n})}^{+,n}} N(ds,d\theta)\right] \\
&\hspace{-2em}+ \ind{t\geq t_2} \ind{\theta_1 \leq \mu} \ind{\theta_2 \leq \mu+ \Phi(t_2-t_1)} \ind{\theta_2 \leq \lambda_{t_2}\circ \varepsilon_{x_1}^{+}} \left[\zeta(t-t_2) +\int_{(t_2,t]\times \real_+} \zeta(t-s) \ind{\theta\leq \lambda_s\circ \varepsilon_{(x_1,\ldots,x_{n})}^{+,n}} N(ds,d\theta) \right],
\end{align*}
where $\ind{\theta_1 \leq \mu} \ind{\theta_2 \leq \mu+ \Phi(t_2-t_1)} \ind{\theta_2 \leq \lambda_{t_2}\circ \varepsilon_{x_1}^{+}} = 1$ following the same lines as in the previous proof.
Hence by induction we derive that 
\begin{align*}
X_t^{\zeta}\circ \varepsilon_{(x_1,\ldots,x_{n})}^{+,n} 
&= X_{t\wedge (t_1-)}^{\zeta} \\
&+ \sum_{i=1}^{n-1} \ind{t\geq t_i} \left[ \zeta(t-t_i) + \int_{(t_i,t\wedge t_{i+1})\times \real_+} \zeta(t-s) \ind{\theta\leq \lambda_s\circ \varepsilon_{(x_1,\ldots,x_{n})}^{+,n}} N(ds,d\theta)\right] \\
&+ \ind{t\geq t_{n}} \left[\zeta(t-t_n) + \int_{(t_{n},t]\times \real_+} \zeta(t-s) \ind{\theta\leq \lambda_s\circ \varepsilon_{(x_1,\ldots,x_{n})}^{+,n}} N(ds,d\theta) \right].
\end{align*}
Taking the expectation we get that : 
\begin{align*}
\E\left[X_t^{\zeta}\circ \varepsilon_{(x_1,\ldots,x_{n})}^{+,n}\right] 
&=\E\left[X_{t\wedge (t_1-)}^{\zeta}\right] \\
&+ \sum_{i=1}^{n-1} \ind{t\geq t_i} \left[ \zeta(t-t_i) + \int_{(t_i,t\wedge t_{i+1})} \zeta(t-s) \E\left[\lambda_s\circ \varepsilon_{(x_1,\ldots,x_{n})}^{+,n}\right] ds\right] \\
&+ \ind{t\geq t_{n}} \left[\zeta(t-t_n) + \int_{(t_{n},t]} \zeta(t-s) \E\left[\lambda_s\circ \varepsilon_{(x_1,\ldots,x_{n})}^{+,n}\right] ds\right] \\
&=\int_0^{t\wedge t_1} \zeta(t-s) \E\left[\lambda_s\circ \varepsilon_{(x_1,\ldots,x_{n})}^{+,n}\right] ds \\
&+ \sum_{i=1}^{n-1} \ind{t\geq t_i} \left[ \zeta(t-t_i) + \int_{(t_i,t\wedge t_{i+1})} \zeta(t-s) \E\left[\lambda_s\circ \varepsilon_{(x_1,\ldots,x_{n})}^{+,n}\right] ds\right] \\
&+ \ind{t\geq t_{n}} \left[\zeta(t-t_n) + \int_{(t_{n},t]} \zeta(t-s) \E\left[\lambda_s\circ \varepsilon_{(x_1,\ldots,x_{n})}^{+,n}\right] ds\right].
\end{align*}
Since we assume $t\geq t_n$,
$$\E\left[X_t^{\zeta}\circ \varepsilon_{(x_1,\ldots,x_{n})}^{+,n}\right] =\int_0^{t} \zeta(t-s) \E\left[\lambda_s\circ \varepsilon_{(x_1,\ldots,x_{n})}^{+,n}\right] ds + \sum_{i=1}^{n} \zeta(t-t_i).
$$
The conclusion follows by Proposition \ref{prop:expectlambdashifted}.
\end{proof}

\begin{theorem}
\label{th:main2}
Let $\zeta \equiv \Phi$ or $\zeta \equiv 1$ (recall Notation \ref{notationX}). 
\begin{itemize}
\item[(i)] For $s\geq 0$
$$
\E\left[X_s^\zeta\right] =\left\lbrace\begin{array}{l} \mu s + \mu \int_0^s\int_0^u \Psi(r) dr du, \quad \textrm{ if } \zeta \equiv 1,\\ \\ \mu \int_0^s \Psi(u) du, \quad \textrm{ if } \zeta \equiv \Phi. \end{array}\right..
$$
\item[(ii)] 
Let $s\leq t$.
We have 
\begin{align}
\label{eq:mainbetaphi}
\E\left[X_s^\Phi X_t^\zeta\right] 
&= \E\left[X_s^\Phi\right] \E\left[X_t^\zeta\right] 
+\mu \int_0^s \zeta(t-v) \Psi(s-v) \left(1+\int_0^{v} \Psi(w) dw\right) dv \nonumber \\
&+\mu \int_0^t \int_0^{s\wedge u} \zeta(t-u) \Psi(u-v) \Psi(s-v) \left(1+\int_0^{v} \Psi(v-w) dw\right) dv du.
\end{align}
\begin{align}
\label{eq:mainbeta1}
\E\left[X_s^1 X_t^\zeta\right] 
&= \E\left[X_s^1\right] \E\left[X_t^\zeta\right] 
+\mu \int_0^s \zeta(t-u) \left( 1+\int_{u}^s \Psi(y-u) dy\right) \left(1+\int_0^{u} \Psi(w) dw\right) du \nonumber \\
&+\mu \int_0^t \int_0^{s\wedge u} \zeta(t-u) \Psi(u-v) \left(1+\int_0^{v} \Psi(w) dw\right) \left(1+\int_{v}^s \Psi(y-v) dy\right) dv du.
\end{align}

\end{itemize}
\end{theorem}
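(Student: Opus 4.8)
The plan is to run the pseudo-chaotic expansion $X_t^{\zeta}=\sum_{n\geq 1}\frac1{n!}\I_n(c_n^{\zeta,t})$ of Proposition~\ref{prop:pseudochageneral} through Mecke's formula (Proposition~\ref{prop:Mecke}): testing $\I_n(c_n^{\zeta,t})$ against $F=1$ gives Part~(i), and testing it against $F=X_s^{\xi}$ (with $\xi\equiv\Phi$ or $\xi\equiv 1$ the parameter of the first factor) gives Part~(ii), precisely because Proposition~\ref{prop:expectationXtshifted} computes the perturbed expectation $\E[X_s^{\xi}\circ\varepsilon^{+,n}_{(x_1,\ldots,x_n)}]$ that Mecke produces, while Proposition~\ref{prop:integratec} evaluates $\int_{\real_+^n}c_n^{\zeta,t}\,d\theta$. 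Since $c_n^{\zeta,t}$ is symmetric and supported on $\{t_i\le t\}$, every integral $\frac1{n!}\int_{\X^n}(\cdots)c_n^{\zeta,t}\,dx$ collapses to $\int_{0<t_1<\cdots<t_n\le t}\int_{\real_+^n}(\cdots)c_n^{\zeta,t}\,d\theta\,dt$. All the interchanges of the series with expectation/integration are justified by $X_s^\Phi,X_s^1,X_t^\zeta\in L^2(\Omega)$ and by the geometric bound $\sum_n\|\Phi_n\|_1=\sum_n\|\Phi\|_1^n<\infty$; I will invoke this once without further comment.

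\emph{Part (i).} By Proposition~\ref{prop:Mecke} with $F=1$ and Proposition~\ref{prop:integratec},
\[
\E[X_t^{\zeta}]=\sum_{n\geq1}\int_{0<t_1<\cdots<t_n\le t}\mu\,\zeta(t-t_n)\prod_{i=2}^{n}\Phi(t_i-t_{i-1})\,dt_1\cdots dt_n .
\]
Integrating $t_1,\ldots,t_{n-1}$ over the sub-simplex $\{0<t_1<\cdots<t_{n-1}<t_n\}$ and using the ``in particular'' part of Lemma~\ref{lemma:magic} (with $s=0$) rewrites the chain $\prod_{i=2}^{n}\Phi(t_i-t_{i-1})$ as $\int_0^{t_n}\Phi_{n-1}(t_n-r)\,dr$ for $n\geq2$; summing over $n$ and using $\sum_{m\geq1}\Phi_m=\Psi$ gives $\E[X_t^{\zeta}]=\mu\int_0^t\zeta(t-u)\bigl(1+\int_0^u\Psi(u-r)\,dr\bigr)du$. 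For $\zeta\equiv1$ this is $\mu s+\mu\int_0^s\int_0^u\Psi(r)\,dr\,du$; for $\zeta\equiv\Phi$, Lemma~\ref{lemma:Bacryetal} applied with $g\equiv1$ identifies $1+\int_0^u\Psi(u-r)\,dr$ as the unique solution of $f=1+\Phi\ast f$, whence $\int_0^t\Phi(t-u)\bigl(1+\int_0^u\Psi(u-r)\,dr\bigr)du=\int_0^t\Psi(w)\,dw$, i.e. $\E[X_t^\Phi]=\mu\int_0^t\Psi(w)\,dw$.

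\emph{Part (ii).} Fix $\xi\equiv\Phi$ or $\xi\equiv 1$ and $s\le t$. By Mecke's formula and Lemma~\ref{lemma:necessarycn}, the integrand lives on $\{\theta_1\le\mu\}\cap\bigcap_{i\ge2}\{\theta_i\le\mu+\sum_{j<i}\Phi(t_i-t_j)\}$; on this set, writing $k=\#\{i:t_i\le s\}$ and observing that adding atoms at times $>s$ leaves $X_s^{\xi}$ unchanged, Proposition~\ref{prop:expectationXtshifted} (with $t\to s$, $n\to k$) together with Part~(i) gives
\[
\E\bigl[X_s^{\xi}\circ\varepsilon^{+,n}_{(x_1,\ldots,x_n)}\bigr]=\E[X_s^{\xi}]+\sum_{i:\,t_i\le s}h^{\xi}(t_i),\qquad h^{\xi}(v):=\xi(s-v)+\int_v^s\xi(s-u)\,\Psi(u-v)\,du .
\]
This is $\theta$-free, so Proposition~\ref{prop:integratec} again replaces $\int_{\real_+^n}c_n^{\zeta,t}\,d\theta$ by $\mu\,\zeta(t-t_n)\prod_{i=2}^n\Phi(t_i-t_{i-1})$; the $\E[X_s^{\xi}]$-part recombines, through Part~(i), into $\E[X_s^{\xi}]\,\E[X_t^{\zeta}]$, leaving
\[
\Cov(X_s^{\xi},X_t^{\zeta})=\mu\sum_{n\geq1}\sum_{j=1}^{n}\int_{\substack{0<t_1<\cdots<t_n\le t\\ t_j\le s}}h^{\xi}(t_j)\,\zeta(t-t_n)\prod_{i=2}^{n}\Phi(t_i-t_{i-1})\,dt .
\]
The crux is to \emph{decouple at $t_j$}: for fixed $n$ and $j$, integrate $t_1,\ldots,t_{j-1}$ over $\{0<t_1<\cdots<t_{j-1}<t_j\}$ and $t_{j+1},\ldots,t_n$ over $\{t_j<t_{j+1}<\cdots<t_n\le t\}$ separately. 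Lemma~\ref{lemma:magic} and iterated convolution turn the first into $\int_0^{t_j}\Phi_{j-1}(t_j-r)\,dr$ (equal to $1$ if $j=1$) and the second into $\int_{t_j}^{t}\zeta(t-r)\Phi_{n-j}(r-t_j)\,dr$ (equal to $\zeta(t-t_j)$ if $j=n$). Reorganising $\sum_{n\ge1}\sum_{j=1}^n$ as $\sum_{j\ge1}\sum_{n\ge j}$, summing the ``after-$t_j$'' pieces over $n\ge j$ collapses them into $\zeta(t-t_j)+\int_{t_j}^t\zeta(t-r)\Psi(r-t_j)\,dr$, and summing the ``before-$t_j$'' pieces over $j\ge1$ collapses them into $1+\int_0^{t_j}\Psi(w)\,dw$. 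Writing $v=t_j$, this yields the compact identity
\[
\Cov(X_s^{\xi},X_t^{\zeta})=\mu\int_0^s h^{\xi}(v)\Bigl(1+\int_0^v\Psi(w)\,dw\Bigr)\Bigl(\zeta(t-v)+\int_v^t\zeta(t-r)\Psi(r-v)\,dr\Bigr)dv .
\]
Finally I specialise: for $\xi\equiv\Phi$, the convolution identity $\Psi=\Phi+\Phi\ast\Psi$ (immediate from \eqref{eq:Phin}--\eqref{eq:Psi}) gives $h^{\Phi}(v)=\Psi(s-v)$, and for $\xi\equiv1$, $h^{1}(v)=1+\int_v^s\Psi(y-v)\,dy$. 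Substituting, splitting the last bracket into its two summands, and applying a single Fubini exchange (turning $\int_0^s\!\int_v^t$ into $\int_0^t\!\int_0^{s\wedge u}$) to the summand carrying $\int_v^t\zeta(t-r)\Psi(r-v)\,dr$ produces exactly \eqref{eq:mainbetaphi} and \eqref{eq:mainbeta1}.

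The main obstacle I anticipate is the bookkeeping of the decoupling step: one must correctly split the ordered-simplex integral at the distinguished index $j$, recognise each of the two resulting chain integrals as an iterated convolution $\Phi_m$ (with one free endpoint, convolved with $\zeta$ on the right) via Lemma~\ref{lemma:magic}, handle cleanly the boundary cases $j=1$, $j=n$ and $k=0$, and push through the two nested resummations (into $\Psi$, then into $1+\int\Psi$) using the absolute convergence guaranteed by $\|\Phi\|_1<1$. Once the boxed compact identity is in hand, deriving the four stated formulas and the remark on $\E[(H_t)^2]$ is routine.
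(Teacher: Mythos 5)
Your proposal is correct, and it uses the same toolkit as the paper (pseudo-chaotic expansion, Mecke's formula, the support Lemma \ref{lemma:necessarycn}, the shifted expectations of Propositions \ref{prop:expectlambdashifted}--\ref{prop:expectationXtshifted}, Proposition \ref{prop:integratec}, Lemma \ref{lemma:magic} and geometric resummation into $\Psi$), but it runs the argument in the mirror direction: you expand the \emph{later-time} factor $X_t^{\zeta}$ in pseudo-chaos and shift the earlier-time factor $X_s^{\xi}$, whereas the paper expands $X_s^{\xi}$ (all marks then lie in $[0,s]\subset[0,t]$, so Proposition \ref{prop:expectationXtshifted} applies verbatim to $X_t^{\zeta}\circ\varepsilon^{+,n}$) and shifts $X_t^{\zeta}$. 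Your variant needs the extra localization step that marks with time component $>s$ leave $X_s^{\xi}$ unchanged (valid, since $\lambda_u$ for $u\leq s$ only sees atoms before $u$, and the exceptional event $t_i=s$ is Lebesgue-negligible in the $dx$ integration), and it puts the constraint $t_j\leq s$ into the decoupling at the distinguished index; in exchange you get the single compact identity $\Cov(X_s^{\xi},X_t^{\zeta})=\mu\int_0^s h^{\xi}(v)\bigl(1+\int_0^v\Psi(w)dw\bigr)\bigl(\zeta(t-v)+\int_v^t\zeta(t-r)\Psi(r-v)dr\bigr)dv$ with $h^{\Phi}(v)=\Psi(s-v)$ and $h^{1}(v)=1+\int_v^s\Psi(y-v)dy$, which reproduces \eqref{eq:mainbetaphi} and \eqref{eq:mainbeta1} after one Fubini exchange, using $\Psi=\Phi+\Phi\ast\Psi$ exactly where the paper uses $\Psi\ast\Phi=\Psi-\Phi$ in its separate treatment of the terms $T_1$, $T_2$ and of the two cases $\xi\equiv 1$, $\xi\equiv\Phi$. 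The paper's ordering avoids the localization argument; yours unifies the case analysis and makes the final identification with the stated formulas more transparent. Your Part (i), including the identification $\E[X_t^{\Phi}]=\mu\int_0^t\Psi$ via Lemma \ref{lemma:Bacryetal} with $g\equiv 1$, is a slight (correct) shortcut of the paper's direct summation of the $\Phi_n$.
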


\begin{proof}
Through this proof we adopt the notations that 
$$ \int_a^b \Phi_0(t) dt :=1, \quad \forall (a,b), \; 0 \leq a \leq b,$$
$$\mbox{  and for  } n\in \mathbb N^*, \quad 
\Delta^s_{(n)}:=\left\{(x_1,\cdots,x_n) \in ([0,s]\times \real_+)^n; \; t_1<\cdots<t_n \leq s\right\}.$$
We start with Part (i). By the pseudo-chaotic expansion of $X_s^\zeta$, we have that 
$$ X_s^\zeta = \sum_{n=1}^{+\infty} \frac{1}{n!} \I_n(c_n^{\zeta,s}).$$
Hence using Relation (\ref{eq:expectIj}) and Proposition \ref{prop:Mecke}
\begin{align*}
\E\left[X_s^\zeta \right] &=\sum_{n=1}^{+\infty} \frac{1}{n!} \E\left[\I_n(c_n^{\zeta,s})\right]\\
&=\sum_{n=1}^{+\infty} \int_{\Delta^s_{(n)}} c_n^{\zeta,s}(x_1,\cdots,x_n) dx_1\cdots dx_n\\
&=\mu \sum_{n=1}^{+\infty} \int_{0 \leq t_1<\cdots<t_n\leq s} \zeta(s-t_n) \prod_{i=2}^{n} \Phi(t_i-t_{i-1}) dt_1\cdots dt_n,
\end{align*}
where the last equality follows from Proposition \ref{prop:integratec}. Lemma \ref{lemma:magic} gives that 
\begin{align*}
\E\left[X_s^1 \right] 
&=\mu \sum_{n=1}^{+\infty} \int_{0 \leq t_1<\cdots<t_n\leq s} \prod_{i=2}^{n} \Phi(t_i-t_{i-1}) dt_1\cdots dt_n \\
&= \mu \sum_{n=1}^{+\infty} \int_0^s\int_0^u \Phi_{n-1}(u-r) dr du \\
&=\mu \int_0^s\left(1+\int_0^u \Psi(r) dr\right) du.
\end{align*}
The second part of Lemma \ref{lemma:magic} implies that 
\begin{align*}
\E\left[X_s^\Phi \right] 
&=\mu \sum_{n=1}^{+\infty} \int_{0 \leq t_1<\cdots<t_n\leq s} \Phi(s-t_n) \prod_{i=2}^{n} \Phi(t_i-t_{i-1}) dt_1\cdots dt_n \\
&= \mu \sum_{n=1}^{+\infty} \int_0^s \Phi_{n}(s-r) dr= \mu \int_0^s \Psi(r) dr.
\end{align*}
We turn to Part (ii). Let $\xi \equiv \Phi$ or $\xi \equiv 1$.  Using Mecke's formula (\ref{eq:Mecke}), Lemma \ref{lemma:necessarycn} and Proposition \ref{prop:expectationXtshifted}, we get
\begin{align*}
&\E\left[X_s^\xi X_t^\zeta\right] \\
&= \sum_{n=1}^{+\infty} \frac{1}{n!} \E\left[\I_n(c_n^{\xi,s}) X_t^\zeta\right] \\
&= \sum_{n=1}^{+\infty} \frac{1}{n!} \int_{([0,s]\times \real_+)^n} \E\left[c_n^{\xi,s}(x_1,\cdots,x_n) X_t^\zeta \circ \varepsilon_{(x_1,\ldots,x_{n})}^{+,n} \right] dx_1\cdots dx_n\\
&= \sum_{n=1}^{+\infty} \int_{\Delta^s_{(n)}} c_n^{\xi,s}(x_1,\cdots,x_n) \E\left[ X_t^\zeta \circ \varepsilon_{(x_1,\ldots,x_{n})}^{+,n} \right] dx_1\cdots dx_n\\
&= \sum_{n=1}^{+\infty} \int_{\Delta^s_{(n)}} c_n^{\xi,s}(x_1,\cdots,x_n) \left[\int_0^{t} \zeta(t-u) \varphi^{t_1,\cdots,t_n}(u)du + \sum_{i=1}^{n} \zeta(t-t_i)\right] dx_1\cdots dx_n
\end{align*}
where $\varphi^{t_1,\cdots,t_n}$ is given by (\ref{eq:mushifted}). Thus using Proposition \ref{prop:integratec}, we get 
\begin{align*}
&\E\left[X_s^\xi X_t^\zeta\right] \\
&= \sum_{n=1}^{+\infty} \int_0^s \int_0^{t_n} \cdots \int_{0}^{t_2} \int_{\real_+^n} c_n^{\xi,s}(x_1,\cdots,x_n) d\theta_1 \cdots d\theta_n  \left[\int_0^{t} \zeta(t-u) \varphi^{t_1,\cdots,t_n}(u)du + \sum_{i=1}^{n} \zeta(t-t_i)\right] dt_1\cdots dt_n \\
&=  \mu \sum_{n=1}^{+\infty} \int_0^s \int_0^{t_n} \cdots \int_{0}^{t_2} \xi(s-t_n) \prod_{i=2}^{n} \Phi(t_i-t_{i-1}) \left[\int_0^{t} \zeta(t-u) \varphi^{t_1,\cdots,t_n}(u)du + \sum_{i=1}^{n} \zeta(t-t_i)\right] dt_1\cdots dt_n.
\end{align*}
Using the definition of $\varphi^{t_1,\cdots,t_n}$ given by (\ref{eq:mushifted}) that is 
$$ \varphi^{t_1,\cdots,t_n}(u) = \mu \left(1+\int_0^u \Psi(u-v) dv\right) +\sum_{j=1}^n \Psi(u-t_j) \ind{[t_j,+\infty)}(u); u\geq 0,$$
the previous expression can be written  as :
\begin{align*}
&\E\left[X_s^\xi X_t^\zeta\right] \\
&=  \mu \sum_{n=1}^{+\infty} \int_0^s \int_0^{t_n} \cdots \int_{0}^{t_2} \xi(s-t_n) \prod_{i=2}^{n} \Phi(t_i-t_{i-1}) \left[\int_0^{t} \zeta(t-u) \varphi^{t_1,\cdots,t_n}(u)du + \sum_{i=1}^{n} \zeta(t-t_i)\right] dt_1\cdots dt_n \\
&=  \mu^2 \sum_{n=1}^{+\infty} \int_0^s \int_0^{t_n} \cdots \int_{0}^{t_2} \xi(s-t_n) \prod_{i=2}^{n} \Phi(t_i-t_{i-1}) \left[\int_0^{t} \zeta(t-u) \left(1+\int_0^u \Psi(u-v) dv\right) du \right] dt_1\cdots dt_n \\
&+  \mu \sum_{n=1}^{+\infty} \int_0^s \int_0^{t_n} \cdots \int_{0}^{t_2} \xi(s-t_n) \prod_{i=2}^{n} \Phi(t_i-t_{i-1}) \\
&\times \left[\int_0^{t} \zeta(t-u) \sum_{j=1}^n \Psi(u-t_j) \ind{[t_j,+\infty)}(u) du + \sum_{i=1}^{n} \zeta(t-t_i)\right] dt_1\cdots dt_n \\
&= \mu^2 \left[\int_0^{t} \zeta(t-u) \left(1+\int_0^u \Psi(u-v) dv\right) du \right] \sum_{n=1}^{+\infty} \int_0^s \int_0^{t_n} \cdots \int_{0}^{t_2} \xi(s-t_n) \prod_{i=2}^{n} \Phi(t_i-t_{i-1}) dt_1\cdots dt_n \\
&+  \mu \sum_{n=1}^{+\infty} \int_0^s \int_0^{t_n} \cdots \int_{0}^{t_2} \xi(s-t_n) \prod_{i=2}^{n} \Phi(t_i-t_{i-1}) \sum_{j=1}^n \left(\zeta(t-t_j)+\int_{t_j}^{t} \zeta(t-u)  \Psi(u-t_j) du \right) dt_1\cdots dt_n.
\end{align*}
Using the computations of Part (i) we identify that  
$$
\mu \sum_{n=1}^{+\infty} \int_0^s \int_0^{t_n} \cdots \int_{0}^{t_2} \xi(s-t_n) \prod_{i=2}^{n} \Phi(t_i-t_{i-1}) dt_1\cdots dt_n =\E\left[X_s^{\xi}\right].
$$
In addition, if $\zeta \equiv 1$, 
$$ \mu \left[\int_0^{t} \zeta(t-u) \left(1+\int_0^u \Psi(u-v) dv\right) du \right] = \E\left[X_t^{\zeta}\right].$$
In case $\zeta \equiv \Phi$ we have
\begin{align*}
&\mu \left[\int_0^{t} \zeta(t-u) \left(1+\int_0^u \Psi(u-v) dv\right) du \right]\\
&=\mu \left[\int_0^{t} \Phi(t-u) du + \int_0^t \Phi(t-u) \int_0^u \Psi(u-v) dv du \right] \\
&=\mu \left[\int_0^{t} \Phi(t-u) du + \int_0^t \int_v^t \Phi(t-u) \Psi(u-v) du dv \right] \\
&=\mu \left[\int_0^{t} \Phi(t-u) du + \int_0^t \int_0^{t-v} \Phi(t-v-w) \Psi(w) du dv \right] \\
&=\mu \left[\int_0^{t} \Phi(t-u) du + \int_0^t (\Psi \ast \Phi)(t-v)dv \right] \\
&=\mu \left[\int_0^{t} \Phi(t-u) du + \int_0^t \Psi(t-v)dv - \int_0^t \Phi(t-v)dv \right] \\
&=\mu \int_0^t \Psi(t-v)dv = \E\left[X_t^\zeta\right].
\end{align*}
where we have used the fact that $\Psi \ast \Phi = \Psi - \Phi$.
Hence
\begin{align*}
&\E\left[X_s^\xi X_t^\zeta\right] \\
&= \E\left[X_s^\xi\right] \E\left[X_t^\zeta\right] \\
&+  \mu \sum_{n=1}^{+\infty} \int_0^s \int_0^{t_n} \cdots \int_{0}^{t_2} \xi(s-t_n) \prod_{i=2}^{n} \Phi(t_i-t_{i-1}) \sum_{j=1}^n \left(\zeta(t-t_j)+\int_{t_j}^{t} \zeta(t-u)  \Psi(u-t_j) du \right) dt_1\cdots dt_n.
\end{align*}
We now deal with the term in $\mu$. We have 
\begin{align*}
&\sum_{n=1}^{+\infty} \int_0^s \int_0^{t_n} \cdots \int_{0}^{t_2} \xi(s-t_n) \prod_{i=2}^{n} \Phi(t_i-t_{i-1}) \sum_{j=1}^n \left(\zeta(t-t_j)+\int_{t_j}^{t} \zeta(t-u)  \Psi(u-t_j) du \right) dt_1\cdots dt_n\\
&= \sum_{j=1}^{+\infty} \sum_{n=j}^{+\infty} \int_0^s \int_0^{t_n} \cdots \int_{0}^{t_2} \xi(s-t_n) \prod_{i=2}^{n} \Phi(t_i-t_{i-1}) \left(\zeta(t-t_j)+\int_{t_j}^{t} \zeta(t-u)  \Psi(u-t_j) du \right) dt_1\cdots dt_n\\
&= \sum_{j=1}^{+\infty} \sum_{n=j}^{+\infty} \int_0^s \int_0^{t_n} \cdots \int_{0}^{t_2} \xi(s-t_n) \prod_{i=2}^{n} \Phi(t_i-t_{i-1}) \zeta(t-t_j) dt_1\cdots dt_n\\
&+ \sum_{j=1}^{+\infty} \sum_{n=j}^{+\infty} \int_0^s \int_0^{t_n} \cdots \int_{0}^{t_2} \xi(s-t_n) \prod_{i=2}^{n} \Phi(t_i-t_{i-1}) \int_{t_j}^{t} \zeta(t-u) \Psi(u-t_j) du dt_1\cdots dt_n\\
&=:T_1+T_2.
\end{align*}
We treat the two terms separately. We will make use for both terms of Fubini's theorem. For term $T_2$ the domain of integration is : 
$$ 0< t_1 < \cdots < t_j < t_{j+1}< \cdots < t_n < s; \; \quad t_j < u <t.$$
We rewrite this domain as 
$$ \int_0^t \int_0^{s\wedge u} \left(\int_{0<t_1<\cdots<t_j} dt_1\cdots dt_{j-1}\right) \left(\int_{t_j<t_{j+1}<\cdots<t_n<s} dt_{j+1}\cdots dt_{n}\right) dt_jdu.$$
Hence we have
\begin{align*}
&T_2\\
&=\sum_{j=1}^{+\infty} \sum_{n=j}^{+\infty} \int_0^s \int_0^{t_n} \cdots \int_{0}^{t_2} \xi(s-t_n) \prod_{i=2}^{n} \Phi(t_i-t_{i-1}) \int_{t_j}^{t} \zeta(t-u) \Psi(u-t_j) du dt_1\cdots dt_n\\
&= \sum_{j=1}^{+\infty} \sum_{n=j}^{+\infty} \int_0^t \int_0^{s\wedge u} \zeta(t-u) \Psi(u-t_j) \left(\int_{t_j}^s \xi(s-t_n) \int_{t_j}^{t_n} \cdots \int_{t_j}^{t_{j+2}} \prod_{k=j+1}^{n} \Phi(t_k-t_{k-1}) dt_{j+1} \cdots dt_{n-1} dt_n\right) \\
&\times \left(\int_0^{t_j} \int_0^{t_{j-1}} \cdots \int_0^{t_2} \prod_{\ell=2}^{j} \Phi(t_\ell-t_{\ell-1}) dt_1 \cdots dt_{j-2} dt_{j-1}\right) dt_j du.
\end{align*}
By Lemma \ref{lemma:magic},
$$  \int_0^{t_j} \int_0^{t_{j-1}} \cdots \int_0^{t_2} \prod_{\ell=2}^{j} \Phi(t_\ell-t_{\ell-1}) dt_1 \cdots dt_{j-2} dt_{j-1} = \int_0^{t_j} \Phi_{j-1}(t_j-w) dw$$
and
$$
 \int_{t_j}^s \xi(s-t_n) \int_{t_j}^{t_n} \cdots \int_{t_j}^{t_{j+2}} \prod_{k=j+1}^{n} \Phi(t_k-t_{k-1}) dt_{j+1} \cdots dt_{n-1} dt_n \hspace{6cm}$$
  \vspace{-0.5cm}
 $$
  \hspace{4cm} =
\left\{
\begin{array}{ll}
  \int_{t_j}^s \Phi_{n-j}(y-t_j) dy, \; &\textrm{ if } \xi \equiv 1  \\
\Phi_{n-j+1}(s-t_j), \; &\textrm{ if } \xi \equiv \Phi \textrm{ and } j \geq n+1;\\
\xi(s-t_j), \; &\textrm{ if } \xi \equiv \Phi \textrm{ and } j = n.
\end{array}
\right.
$$
$$ \mbox{ In addition,  recalling that }   \int \Phi_0 = 1,   \mbox{ we have} \quad   \sum_{j=0}^{+\infty} \int \Phi_{j}(x) dx = 1+\int \Psi(x) dx.  \hspace{4cm} $$
\begin{itemize}
\item If $\xi \equiv 1$
\begin{align*}
T_2
&= \sum_{j=1}^{+\infty} \sum_{n=j}^{+\infty} \int_0^t \int_0^{s\wedge u} \zeta(t-u) \Psi(u-t_j) \int_{t_j}^s \Phi_{n-j}(y-t_j) dy \times \int_0^{t_j} \Phi_{j-1}(t_j-w) dw dt_j du \\
&= \sum_{j=1}^{+\infty} \sum_{n=j}^{+\infty} \int_0^t \int_0^{s\wedge u} \zeta(t-u) \Psi(u-v) \int_{v}^s \Phi_{n-j}(y-v) dy \times \int_0^{v} \Phi_{j-1}(v-w) dw dv du \\
&= \sum_{j=1}^{+\infty} \int_0^t \int_0^{s\wedge u} \zeta(t-u) \Psi(u-v) \left(1+\int_{v}^s \Psi(y-v) dy \right) \times \int_0^{v} \Phi_{j-1}(v-w) dw dv du \\
&= \int_0^t \int_0^{s\wedge u} \zeta(t-u) \Psi(u-v) \left(1+\int_{v}^s \Psi(y-v) dy \right) \left( 1+\int_0^{v} \Psi(v-w) dw\right) dv du.
\end{align*}

\item If $\xi \equiv \Phi$,
\begin{align*}
T_2
&= \sum_{j=1}^{+\infty} \sum_{n=j}^{+\infty} \int_0^t \int_0^{s\wedge u} \zeta(t-u) \Psi(u-t_j) \\
&\times \left(\int_{t_j}^s \xi(s-t_n) \int_{t_j}^{t_n} \cdots \int_{t_j}^{t_{j+2}} \prod_{k=j+1}^{n} \Phi(t_k-t_{k-1}) dt_{j+1} \cdots dt_{n-1} dt_n\right) \\
&\times \left(\int_0^{t_j} \int_0^{t_{j-1}} \cdots \int_0^{t_2} \prod_{\ell=2}^{j} \Phi(t_\ell-t_{\ell-1}) dt_1 \cdots dt_{j-2} dt_{j-1}\right) dt_j du\\
&= \sum_{j=1}^{+\infty} \int_0^t \int_0^{s\wedge u} \zeta(t-u) \Psi(u-t_j) \Phi(s-t_j) \int_0^{t_j} \Phi_{j-1}(t_j-w) dw dt_j du\\
&+ \sum_{j=1}^{+\infty} \sum_{n=j+1}^{+\infty} \int_0^t \int_0^{s\wedge u} \zeta(t-u) \Psi(u-t_j) \Phi_{n-j+1}(s-t_j) \int_0^{t_j} \Phi_{j-1}(t_j-w) dw dt_j du\\
&= \sum_{j=1}^{+\infty} \int_0^t \int_0^{s\wedge u} \zeta(t-u) \Psi(u-t_j) \Phi(s-t_j) \int_0^{t_j} \Phi_{j-1}(t_j-w) dw dt_j du\\
&+ \sum_{j=1}^{+\infty} \int_0^t \int_0^{s\wedge u} \zeta(t-u) \Psi(u-t_j) (\Psi(s-t_j)-\Phi(s-t_j)) \int_0^{t_j} \Phi_{j-1}(t_j-w) dw dt_j du\\
&= \sum_{j=1}^{+\infty} \int_0^t \int_0^{s\wedge u} \zeta(t-u) \Psi(u-t_j) \Psi(s-t_j) \int_0^{t_j} \Phi_{j-1}(t_j-w) dw dt_j du\\
&= \sum_{j=1}^{+\infty} \int_0^t \int_0^{s\wedge u} \zeta(t-u) \Psi(u-v) \Psi(s-v) \int_0^{v} \Phi_{j-1}(v-w) dw dv du\\
&= \int_0^t \int_0^{s\wedge u} \zeta(t-u) \Psi(u-v) \Psi(s-v) \left(1+\int_0^{v} \Psi(v-w) dw\right) dv du.
\end{align*}

\end{itemize}

Using once again Fubini's theorem we get 

\begin{align*}
T_1
&=\sum_{j=1}^{+\infty} \sum_{n=j}^{+\infty} \int_0^s \int_0^{t_n} \cdots \int_{0}^{t_2} \xi(s-t_n) \prod_{i=2}^{n} \Phi(t_i-t_{i-1}) \zeta(t-t_j) dt_1\cdots dt_n\\
&= \sum_{j=1}^{+\infty} \sum_{n=j}^{+\infty} \int_0^s \zeta(t-t_j) \left(\int_{t_j}^s \xi(s-t_n) \int_{t_j}^{t_n} \cdots \int_{t_j}^{t_{j+2}} \prod_{k=j+1}^{n} \Phi(t_k-t_{k-1}) dt_{j+1} \cdots dt_{n-1} dt_n\right) \\
&\times \left(\int_0^{t_j} \int_0^{t_{j-1}} \cdots \int_0^{t_2} \prod_{\ell=2}^{j} \Phi(t_\ell-t_{\ell-1}) dt_1 \cdots dt_{j-2} dt_{j-1}\right) dt_j.
\end{align*}

\begin{itemize}
\item If $\xi \equiv 1$
\begin{align*}
T_1
&=\sum_{j=1}^n \sum_{n=j}^{+\infty} \int_0^s \int_0^{t_n} \cdots \int_{0}^{t_2} \prod_{i=2}^{n} \Phi(t_i-t_{i-1}) \zeta(t-t_j) dt_1\cdots dt_n\\
&= \sum_{j=1}^n \sum_{n=j}^{+\infty}\int_0^s \zeta(t-t_j) \int_{t_j}^s \Phi_{n-j}(y-t_j) dy\int_0^{t_j} \Phi_{j-1}(t_j-w) dw dt_j\\
&= \sum_{j=1}^n \sum_{n=j}^{+\infty}\int_0^s \zeta(t-r) \int_{r}^s \Phi_{n-j}(y-r) dy\int_0^{r} \Phi_{j-1}(r-w) dw dr\\
&= \int_0^s \zeta(t-u) \left( 1+\int_{u}^s \Psi(y-u) dy\right) \left(1+\int_0^{u} \Psi(w) dw\right) du.
\end{align*}
\item If $\xi \equiv \Phi$
\begin{align*}
T_1
&=\sum_{j=1}^{+\infty} \sum_{n=j}^{+\infty} \int_0^s \zeta(t-t_j) \left(\int_{t_j}^s \xi(s-t_n) \int_{t_j}^{t_n} \cdots \int_{t_j}^{t_{j+2}} \prod_{k=j+1}^{n} \Phi(t_k-t_{k-1}) dt_{j+1} \cdots dt_{n-1} dt_n\right) \\
&\times \left(\int_0^{t_j} \int_0^{t_{j-1}} \cdots \int_0^{t_2} \prod_{\ell=2}^{j} \Phi(t_\ell-t_{\ell-1}) dt_1 \cdots dt_{j-2} dt_{j-1}\right) dt_j \\
&=\sum_{j=1}^{+\infty} \int_0^s \zeta(t-t_j) \xi(s-t_j) \int_0^{t_j} \Phi_{j-1}(t_j-w) dw dt_j \\
&+\sum_{j=1}^{+\infty} \sum_{n=j+1}^{+\infty} \int_0^s \zeta(t-t_j) \Phi_{n-j+1}(s-t_j) \int_0^{t_j} \Phi_{j-1}(t_j-w) dw dt_j \\
&=\sum_{j=1}^{+\infty} \int_0^s \zeta(t-v) \xi(s-v) \int_0^{v} \Phi_{j-1}(v-w) dw dv \\
&+\sum_{j=1}^{+\infty} \sum_{n=j+1}^{+\infty} \int_0^s \zeta(t-t_j) \Phi_{n-j+1}(s-v) \int_0^{v} \Phi_{j-1}(v-w) dw dv \\
&=\sum_{j=1}^{+\infty} \int_0^s \zeta(t-v) \Phi(s-v) \int_0^{v} \Phi_{j-1}(v-w) dw dv \\
&+\sum_{j=1}^{+\infty} \int_0^s \zeta(t-v) (\Psi(s-v)-\Phi(s-v)) \int_0^{v} \Phi_{j-1}(v-w) dw dv \\
&=\int_0^s \zeta(t-v) \Psi(s-v) \left(1+\int_0^{v} \Psi(w) dw\right) dv.
\end{align*}
\end{itemize}
Putting together the terms $T_1$ and $T_2$ for $\xi \equiv 1$ (resp. $\xi \equiv \Phi$), we get Relation  \eqref{eq:mainbeta1} (resp.  Relation  \eqref{eq:mainbetaphi}). 
\end{proof}

\subsection{Proof of Theorem \ref{th:main}}
\label{subsection:proofof main}

This result is a direct consequence of Theorem \ref{th:main2}. More precisely, from Part (i) of Theorem \ref{th:main2} we recover the well-known expressions of the expectation of the marginals of the Hawkes process and of its intensity. Indeed (noting that $\lambda_t = \mu + X_t^\xi$ with $\xi\equiv \Phi$)
$$\E\left[H_t\right] = \mu t + \mu \int_0^t\int_0^u \Psi(r) dr du; \quad \E\left[\lambda_t\right] = \mu \left(1+\int_0^t\Psi(r) dr\right); \quad t\geq 0.$$

\noindent
\textbf{Proof of Part (i):\\\\}
\noindent
We apply Relation (\ref{eq:mainbeta1}) with $\zeta \equiv 1$. We have for any $s\leq t$,

\begin{align*}
\E\left[H_s H_s\right]
&= \E\left[H_s\right] \E\left[H_t\right] \nonumber\\
&+ \mu \int_0^s \left( 1+\int_{u}^s \Psi(y-u) dy\right) \left(1+\int_0^{u} \Psi(w) dw\right) du \nonumber\\
&+\mu \int_0^t \int_0^{s\wedge u} \Psi(u-v) \left(1+\int_0^{v} \Psi(w) dw\right) \left(1+\int_{v}^s \Psi(y-v) dy\right) dv du \\
&= \E\left[H_s\right] \E\left[H_t\right] \nonumber\\
&+ \mu \int_0^s \left( 1+\int_{u}^s \Psi(y-u) dy\right) \left(1+\int_0^{u} \Psi(w) dw\right) du \nonumber\\
&+\mu \int_0^s \int_0^{u} \Psi(u-v) \left(1+\int_0^{v} \Psi(w) dw\right) \left(1+\int_{v}^s \Psi(y-v) dy\right) dv du \\
&+\mu \int_s^t \int_0^{s} \Psi(u-v) \left(1+\int_0^{v} \Psi(w) dw\right) \left(1+\int_{v}^s \Psi(y-v) dy\right) dv du \\
&= \E\left[H_s\right] \E\left[H_t\right] \nonumber\\
&+ \mu \int_0^s \left( 1+\int_{u}^s \Psi(y-u) dy\right) \left(1+\int_0^{u} \Psi(w) dw\right) du \nonumber\\
&+\mu \int_0^s \left(1+\int_0^{v} \Psi(w) dw\right) \left(1+\int_{v}^s \Psi(y-v) dy\right) \int_v^{s} \Psi(u-v) du dv \\
&+\mu \int_0^{s} \left(1+\int_0^{v} \Psi(w) dw\right) \left(1+\int_{v}^s \Psi(y-v) dy\right) \int_s^t \Psi(u-v) du dv \\
&= \E\left[H_s\right] \E\left[H_t\right] \nonumber\\
&+ \mu \int_0^s \left( 1+\int_{u}^s \Psi(y-u) dy\right) \left(1+\int_0^{u} \Psi(w) dw\right) du \nonumber\\
&+\mu \int_0^s \left(1+\int_0^{v} \Psi(w) dw\right) \left(1+\int_{v}^s \Psi(y-v) dy\right) \int_v^{t} \Psi(u-v) du dv \\
&= \E\left[H_s\right] \E\left[H_t\right] \nonumber\\
&+\mu \int_0^s \left(1+\int_0^{v} \Psi(w) dw\right) \left(1+\int_{v}^s \Psi(y-v) dy\right) \left(1+\int_v^{t} \Psi(u-v) du\right) dv.
\end{align*}

\noindent
\textbf{Proof of Part (ii):\\\\}
\noindent
Let $s\leq t$, once should compute using Theorem \ref{th:main2}: $\E\left[X_s^\xi X_t^\zeta\right]$ first with $(\zeta,\xi) \equiv (1,\Phi)$ and then with $(\zeta,\xi) \equiv (\Phi,1)$. Obviously both quantities are similar. We have by (\ref{eq:mainbetaphi})
\begin{align*}
&\E[(\lambda_s-\mu) H_t] - (\E[\lambda_s]-\mu) \E[H_t]\\
&=\mu \int_0^s \Psi(s-v) \left(1+\int_0^{v} \Psi(w) dw\right) dv \nonumber\\
&+\mu \int_0^t \int_0^{s\wedge u} \Psi(u-v) \Psi(s-v) \left(1+\int_0^{v} \Psi(v-w) dw\right) dv du\\
&=\mu \int_0^s \Psi(s-v) \left(1+\int_0^{v} \Psi(w) dw\right) dv \nonumber\\
&+\mu \int_0^s \int_0^{u} \Psi(u-v) \Psi(s-v) \left(1+\int_0^{v} \Psi(w) dw\right) dv du\\
&+\mu \int_s^t \int_0^{s} \Psi(u-v) \Psi(s-v) \left(1+\int_0^{v} \Psi(w) dw\right) dv du\\
&=\mu \int_0^s \Psi(s-v) \left(1+\int_0^{v} \Psi(w) dw\right) \left(1+ \int_v^{s} \Psi(u-v) du\right) dv\\
&+\mu \int_0^{s} \Psi(s-v)  \int_s^t \Psi(u-v) \left(1+\int_0^{v} \Psi(w) dw\right) du dv\\
&=\mu \int_0^s \Psi(s-v) \left(1+\int_0^{v} \Psi(w) dw\right) \left(1+ \int_v^{t} \Psi(u-v) du\right) dv.
\end{align*}

Thus
$$\E[\lambda_s H_t] - \E[\lambda_s] \E[H_t]=\mu \int_0^s \Psi(s-v) \left(1+\int_0^{v} \Psi(w) dw\right) \left(1+ \int_v^{t} \Psi(u-v) du\right) dv.$$

Similarly using (\ref{eq:mainbeta1})
\begin{align*}
&\E[H_s (\lambda_t-\mu)] - \E[H_s] (\E[\lambda_t]-\mu)\\
&=\mu  \int_0^s \Phi(t-v) \left( 1+\int_{v}^s \Psi(y-v) dy\right) \left(1+\int_0^{v} \Psi(w) dw\right) dv \nonumber\\
&+\mu \int_0^t \int_0^{s\wedge u} \Phi(t-u) \Psi(u-v) \left(1+\int_0^{v} \Psi(w) dw\right) \left(1+\int_{v}^s \Psi(y-v) dy\right) dv du \\
&=\mu \int_0^s \Phi(t-v) \left( 1+\int_{v}^s \Psi(y-v) dy\right) \left(1+\int_0^{v} \Psi(w) dw\right) dv \nonumber\\
&+\mu \int_0^s \int_0^{u} \Phi(t-u) \Psi(u-v) \left(1+\int_0^{v} \Psi(w) dw\right) \left(1+\int_{v}^s \Psi(y-v) dy\right) dv du \\
&+\mu \int_s^t \int_0^{s} \Phi(t-u) \Psi(u-v) \left(1+\int_0^{v} \Psi(w) dw\right) \left(1+\int_{v}^s \Psi(y-v) dy\right) dv du \\
&=\mu \int_0^s \Phi(t-v) \left( 1+\int_{v}^s \Psi(y-v) dy\right) \left(1+\int_0^{v} \Psi(w) dw\right) dv \nonumber\\
&+\mu \int_0^s  \left(1+\int_0^{v} \Psi(w) dw\right) \left(1+\int_{v}^s \Psi(y-v) dy\right) \int_v^{s} \Phi(t-u) \Psi(u-v) du dv \\
&+\mu \int_0^{s} \left(1+\int_0^{v} \Psi(w) dw\right) \left(1+\int_{v}^s \Psi(y-v) dy\right) \int_s^t \Phi(t-u) \Psi(u-v)  du dv \\
&=\mu \int_0^s \Phi(t-v) \left( 1+\int_{v}^s \Psi(y-v) dy\right) \left(1+\int_0^{v} \Psi(w) dw\right) dv \nonumber\\
&+\mu \int_0^s  \left(1+\int_0^{v} \Psi(w) dw\right) \left(1+\int_{v}^s \Psi(y-v) dy\right) \int_0^{t-v} \Phi(t-v-x) \Psi(x) dx dv \\
&=\mu \int_0^s \Phi(t-v) \left( 1+\int_{v}^s \Psi(y-v) dy\right) \left(1+\int_0^{v} \Psi(w) dw\right) dv \nonumber\\
&+\mu \int_0^s  \left(1+\int_0^{v} \Psi(w) dw\right) \left(1+\int_{v}^s \Psi(y-v) dy\right) [\Psi(t-v)-\Phi(t-v)] dv \\
&=\mu \int_0^s \Psi(t-v) \left(1+\int_0^{v} \Psi(w) dw\right) \left(1+\int_{v}^s \Psi(y-v) dy\right) dv.
\end{align*}
Therefore
$$\E[H_s \lambda_t] - \E[H_s] \E[\lambda_t]=\mu \int_0^s \Psi(t-v) \left(1+\int_0^{v} \Psi(w) dw\right) \left(1+\int_{v}^s \Psi(y-v) dy\right) dv.$$

\noindent
\textbf{Proof of Part (iii):\\\\}
\noindent
Let $s\leq t$. Relation (\ref{eq:mainbetaphi}) entails that 
\begin{align*}
&\E\left[(\lambda_s-\mu) (\lambda_t-\mu)\right]\\
&= \E\left[(\lambda_s-\mu)\right] \E\left[(\lambda_t-\mu)\right] \\
&+\mu \int_0^s \Phi(t-v) \Psi(s-v) \left(1+\int_0^{v} \Psi(w) dw\right) dv\\
&+\mu \int_0^t \int_0^{s\wedge u} \Phi(t-u) \Psi(u-v) \Psi(s-v) \left(1+\int_0^{v} \Psi(v-w) dw\right) dv du \\
&= \E\left[(\lambda_s-\mu)\right] \E\left[(\lambda_t-\mu)\right] \\
&+\mu \int_0^s \Phi(t-v) \Psi(s-v) \left(1+\int_0^{v} \Psi(w) dw\right) dv\\
&+\mu \int_0^s \Psi(s-v) \left(1+\int_0^{v} \Psi(v-w) dw\right) \int_v^{s} \Phi(t-u) \Psi(u-v) du dv \\
&+\mu \int_0^{s} \Psi(s-v) \left(1+\int_0^{v} \Psi(v-w) dw\right) \int_s^t \Phi(t-u) \Psi(u-v) du dv \\
&= \E\left[(\lambda_s-\mu)\right] \E\left[(\lambda_t-\mu)\right] \\
&+\mu \int_0^s \Phi(t-v) \Psi(s-v) \left(1+\int_0^{v} \Psi(w) dw\right) dv\\
&+\mu \int_0^s \Psi(s-v) \left(1+\int_0^{v} \Psi(v-w) dw\right) \int_v^{t} \Phi(t-u) \Psi(u-v) du dv \\
&= \E\left[(\lambda_s-\mu)\right] \E\left[(\lambda_t-\mu)\right] \\
&+\mu \int_0^s \Phi(t-v) \Psi(s-v) \left(1+\int_0^{v} \Psi(w) dw\right) dv\\
&+\mu \int_0^s \Psi(s-v) \left(1+\int_0^{v} \Psi(v-w) dw\right) \int_0^{t-v} \Phi(t-v-x) \Psi(x) dx dv \\
&= \E\left[(\lambda_s-\mu)\right] \E\left[(\lambda_t-\mu)\right] \\
&+\mu \int_0^s \Psi(s-v) \Psi(t-v) \left(1+\int_0^{v} \Psi(v-w) dw\right) dv.
\end{align*}

Hence
$$\E\left[\lambda_s \lambda_t\right] - \E\left[\lambda_s\right] \E\left[\lambda_t\right]=\mu \int_0^s \Psi(s-v) \Psi(t-v) \left(1+\int_0^{v} \Psi(v-w) dw\right) dv.$$


\end{document}